\newtheorem{rem}{Remark}
\title{Operator learning approach for the limited view problem\\ in photoacoustic tomography
\thanks{
Department of Mathematics, University of Innsbruck, Technikerstra{\ss}e 13, A-6020 Innsbruck,
Austria, E-mail: {\tt florian.dreier@student.uibk.ac.at, sergiy.pereverzyev@uibk.ac.at,
markus.haltmeier@uibk.ac.at}
}}
\date{\today}
\author{Florian Dreier, Sergiy Pereverzyev Jr.\thanks{
from March 1, 2018 at the Department of Neuroradiology,
Medical University of Innsbruck, Anichstraße 35, A-6020 Innsbruck, Austria,
E-mail: {\tt sergiy.pereverzyev@i-med.ac.at}
}
and Markus Haltmeier}
\newcommand{\rtb}{\color{black}}
\newcommand{\rtba}{\color{black}}
\newcommand{\rte}{\color{black}}
\newcommand{\skpln}{\vspace*{\baselineskip}}
\newcommand{\mc}{\centering}
\newcommand{\ds}{\displaystyle}
\newcommand{\kl}[1]{\left(#1\right)}
\newcommand{\abs}[1]{\left\vert#1\right\vert}
\newcommand{\inner}[1]{\left\langle#1\right\rangle}
\newcommand{\norm}[1]{\left\| #1 \right\|}
\newcommand{\Om}{\Omega}
\newcommand{\Gm}{\Gamma}
\newcommand{\R}{{\mathbb R}}
\newcommand{\N}{{\mathbb N}}
\newcommand{\Dbb}{{\mathbb D}}
\newcommand{\tm}[1]{\mathrm{#1}}
\newcommand{\Pm}{\mathbf{P}}
\newcommand{\cv}{\mathbf{c}}
\newcommand{\uv}{\mathbf{u}}
\newcommand{\Uv}{\mathbf{U}}
\definecolor{lightgray}{RGB}{211,211,211}
\definecolor{lightgreen}{RGB}{144,238,144}
\definecolor{lightorange}{RGB}{255,186,102}
\definecolor{darkorange}{RGB}{255,140,0}
\newcommand{\pOm}{\partial\Omega}
\newcommand{\Ecl}{\mathcal{E}}
\newcommand{\Gcl}{\mathcal{G}}
\newcommand{\Zcl}{\mathcal{Z}}
\newcommand{\Pcl}{\mathcal{P}}
\newcommand{\Ra}{\mathbb{Y}}
\newcommand{\Rb}{\mathbb{Y}_1}
\newcommand{\Rc}{\mathbb{Y}_2}
\newcommand{\Qcl}{\mathcal{Q}}
\newcommand{\Lcl}{\mathcal{L}}
\newcommand{\Ucl}{\mathcal{U}}
\newcommand{\Ufr}{\mathfrak{U}}
\newcommand{\Uo}{\mathcal{U}}
\newcommand{\Ao}{\mathcal{A}}
\newcommand{\hAo}{\hat\Ao_n}
\newcommand{\fze}{\hat f_{ 0 }}
\newcommand{\fle}{\hat f_{ n }}
\newcommand{\uze}{\hat u_{ 0 }}
\newcommand{\ule}{\hat u_{ n }}
\newcommand{\Eze}{\Ecl_{ 0 }}
\newcommand{\Ele}{\Ecl_{ n }}
\newcommand{\kdb}{\kappa_d}
\newcommand{\pdt}{\partial_t}
\newcommand{\Dclt}{\mathcal{D}_t}
\newcommand{\df}{\tm{d}}
\DeclareMathOperator{\supp}{supp}
\DeclareMathOperator{\conv}{conv}
\theoremstyle{plain}
\newtheorem{thm}{Theorem}
\newtheorem{cor}{Corollary}
\theoremstyle{remark}
\begin{document}
	\maketitle

\begin{abstract}

\noindent
In photoacoustic tomography, one is interested to recover the initial pressure distribution inside a tissue from the corresponding
measurements of the induced acoustic wave
on the boundary of a region enclosing the tissue.
In the limited view problem, the wave boundary measurements are given
on the part of the boundary,
whereas in the full view problem, the measurements are known on the whole boundary.
For the full view problem, there exist various fast and robust reconstruction methods. These methods give severe reconstruction
artifacts when they are applied directly to the limited view data.
One approach for reducing such artefacts is trying to extend the limited view data to the
whole region boundary, and then use existing reconstruction methods for the full view data.
In this paper, we propose an operator learning approach for constructing an 
operator that gives an approximate extension of the limited view data.
We consider the behavior of a reconstruction formula on the extended limited view data that is given
by our proposed approach. Approximation errors of our approach are analyzed.
\rtb
We also present numerical results with the proposed extension approach supporting our theoretical analysis.
\rte

\skpln
\noindent
{\bf Keywords:} photoacoustic tomography, wave equation, limited view problem, inversion formula, universal back-projection,
data extension, operator learning.

\skpln
\noindent
{\bf AMS subject classifications:} 65R32, 35L05, 92C55.

\end{abstract}


\section{Introduction}

Photoacoustic tomography (PAT) is an emerging non-invasive imaging technique.
It is based on the photoacoustic effect, and it has a big potential for a successful use in biomedical studies,
including preclinical research and clinical practice.
Applications include
tumor angiogenesis monitoring, blood oxygenation mapping, functional brain imaging,
and skin melanoma detection~\cite{XuWan06,LiWan09,Bea11,XiaYaoWan14}


The principle of PAT is the following.
When short pulses of  non-ionising electromagnetic energy are delivered into a biological (semi-transparent) tissue, then  parts of the electromagnetic energy become absorbed.
The absorbed energy leads to a nonuniform thermoelastic expansion depending  on the tissue structure.
This gives rise to an initial acoustic pressure distribution, which further
is the source of an
acoustic pressure wave.
These waves are detected by a measurement device on the boundary of the tissue.
The mathematical task in PAT is to reconstruct the spatially varying initial pressure distribution using these measurements.
The values of the initial pressure distribution inside the tissue allow to make a judgment about the directly unseen structure of the tissue.
For example, whether there are some abnormal formations inside the investigated tissue, such as a tumor.


Consider the part of the boundary of
a region enclosing the
tissue
where the wave measurements are available. This part is called observation boundary.
If the tissue is fully enclosed by the observation boundary, then one speaks about the full view problem. Otherwise, if some part of the
tissue boundary is not accessible, then one has the so-called limited view problem (LVP). The LVP frequently arises in practice,
for example in breast imaging (see, e.g., \cite{XuWanAmbKuc04,KucKun11}).

The LVP can be approached using iterative reconstruction algorithms
(see, e.g., \cite{PalViaPJ02,PNH07,Her09,YaoJia11,HuaWanNWA13,HalNgu17,RosNtzRaz13}).
Although these algorithms can provide accurate reconstruction,
they are computationally expensive and time consuming. Approaches for the full view problem,
such as time reversal~\cite{BurMatHalPal07,HriKucNgu08},
Fourier domain algorithms~\cite{Hal09,Kun07b,XuXuWan02},
explicit reconstruction formulas~\cite{FinPatRak04,FinHalRak07,Kun07,Kun11,Lin14},
are faster than iterative reconstructions and additionally are robust and accurate. However,
when they are directly applied on the limited view data, then one obtains severe reconstruction artifacts.

And so, an idea appears to try to extend the limited view data to the whole boundary, and then use efficient algorithms
for the full view data on the extended data to obtain a reconstruction of the initial pressure.
Knowing characterizations of the range of the forward operator, which maps the initial pressure distribution to the wave data on the
whole boundary of the tissue, may be used for this purpose
(see, e.g., \cite{AmbKuc06,FinRak06,AgrFinKuc09,KucKun11}
and the references therein).
This knowledge is expressed with so-called range conditions. In~\cite{Pat04,Pat09}, some of these conditions,
the so-called moment conditions, were realized for the extension of the limited view data.

The data extension process based on the moment conditions is unstable,
and therefore, mostly low frequencies of the limited view wave data
can be extended. This instability is connected with the following issue.
The observation boundary defines a so-called detection region,
\rtb
which, for typical measurement configurations, is
the convex hull of the observation boundary~\cite{KucKun08}.
\rte
It is known (see, e.g., \cite{KucKun08,SteUhl09,KucKun11}) that if the support of the initial pressure is contained in this detection region,
then a stable recovery of the initial pressure from the limited view wave data is theoretically possible.
However, the data extension process based on the moment conditions
does not use information about the support of the initial
pressure, and so, it does not employ advantages of the possible stable recovery.

In this paper, we propose a stable method for the extension of the limited view wave data that uses advantages of the
mentioned possible stable recovery. Our method is based on the observation that in the case of the stable
recovery, there exists a continuous data extension operator that maps the limited view wave data to the unknown wave data
on the unobservable part of the boundary.
\rtb
We formally define this operator in Section~\ref{s:EO}.
\rte
However, this operator is not explicitly known.
In our method, we therefore propose to construct an approximate data extension operator using an operator
learning approach that is \rtb inspired \rte by the methods of the statistical learning theory
(see, e.g., \cite{HasTibFri09}).
\rtb
We suggest an operator learning procedure that uses the projection on the linear subspace defined by the training inputs.
\rte

Having an approximately extended limited view wave data, one can employ reconstruction methods for the full
view wave data, such as time reversal or methods based on the explicit inversion formulas.
As an example, we consider an explicit reconstruction formula for that purpose. We demonstrate that the resulting
reconstruction algorithm corrects
most
limited view reconstruction artifacts, while the computational time remains to be low.
The involved steps in the proposed reconstruction approach are illustrated in Figure~\ref{Fig:OLa}.

\begin{figure}[t]
\begin{center}
    \includegraphics[width=15cm]{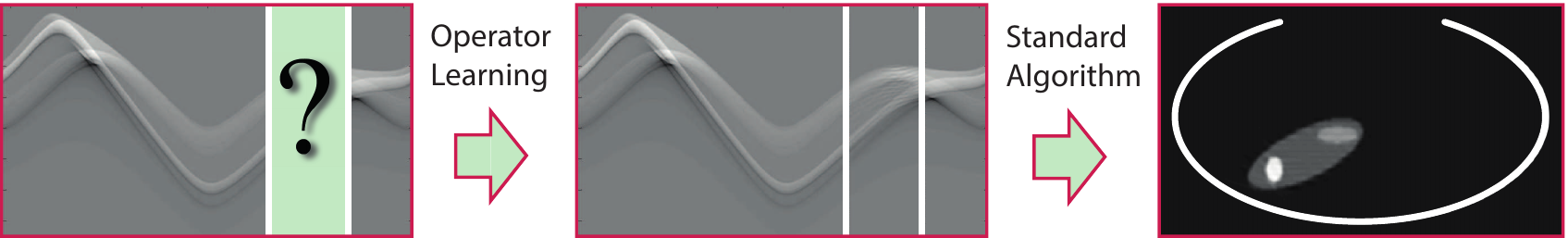}
\end{center}
\caption{
Illustration of the proposed approach for limited view PAT.
In the first step, we extend the limited view data to the whole boundary via operator learning.
In the second step, we apply a standard direct PAT reconstruction algorithm to the completed data.
\label{Fig:OLa}}
\end{figure}

The rest of the paper is organized as follows. In section~\ref{s:Mat}, we present a mathematical background
for PAT, give the used explicit reconstruction formula, and discuss the LVP. Our operator learning approach
to the extension of the limited view wave data is given in section~\ref{s:OL}. In section~\ref{s:EA}, we analyze
the approximation errors of our approach. We look at the approximation errors for the unknown wave data
and for the corresponding reconstructions obtained by explicit reconstruction formulas.
We present the numerical results in section~\ref{s:NR}. Finally, we finish the paper with conclusion and outlook
in section~\ref{s:Con}.


	
\section{Mathematics of PAT}\label{s:Mat}

Let $\Om\subseteq\R^d$ be a bounded domain with a smooth boundary $\partial\Om$, where $d\geq 2$ denotes the spatial dimension.
Further, let $C_c^\infty\kl{\Om}$ be the set of all smooth functions
$f \colon \R^d\to\R$ that are compactly supported in $\Om$.
In PAT, one is interested to recover
an unknown function $f \in C_c^\infty\kl{\Om}$ from the solution of the wave equation given on the boundary of $\Om$.
Let us mathematically specify this reconstruction problem.

\subsection{Reconstruction problem}

Let $\Ufr f  \colon \R^d\times (0,\infty) \to \R$ denote the solution of the following initial value problem for the wave equation:
\begin{equation}\label{ivp_we}
\left\{
\begin{aligned}
    (\partial_t^2 - \Delta_x )\, u(x,t) =& 0
    && \text{ for }  (x,t)\in\R^d\times(0,\infty),  \\
    u(x,0)=& f(x) && \text{ for } x\in\R^d, \\
    (\partial_t u)(x,0)=& 0
    && \text{ for }  x\in\R^d \,.
\end{aligned}
\right.
\end{equation}
Here $\partial_t$ denotes differentiation with respect to the second variable $t$,
and $\Delta_x$ is the Laplacian with respect to $x$.
Then the reconstruction problem in PAT consists in recovering the unknown function $f\in C^\infty_c(\Om)$ from
the corresponding wave boundary data
\begin{equation}\label{pr2}
  u(x,t) = \kl{\Ufr f} (x,t)
  \quad
  \text{ for }  (x,t)\in \Gm_1\times (0,\infty)  \,,
\end{equation}
where $\Gm_1\subseteq\pOm $.
If $\Gm_1 = \pOm$, then~\eqref{pr2} is called full view problem; otherwise,
if $\Gm_1\subsetneq \pOm $, we have the limited view problem (LVP). In this paper, we are particularly interested in the
limited view case, which we consider in some detail in subsection~\ref{s:LVP}.


Let us denote the unobservable part of the boundary as $\Gm_2 := \pOm\setminus \Gm_1$. We define also the following restrictions
of $\Ufr f$:
\begin{equation}\label{U1U2}
  \Ucl f := \Ufr f | _{ \pOm\times (0,\infty) },\;
  \Ucl_1 f := \Ufr f | _{ \Gm_1\times (0,\infty) },\;
  \Ucl_2 f := \Ufr f | _{ \Gm_2\times (0,\infty) }.
\end{equation}
%


Let us note that in practice, the reconstruction problem~\eqref{pr2} arises in PAT in spatial dimensions
two and three. The three dimensional problem appears when the so-called
point-like detectors are used (see, for example,  \cite{XuWan06, KucKun08, FinRak09}).
When one uses linear or circular integrating detectors, then the reconstruction problem~\eqref{pr2}
is considered in two spatial dimensions (see \cite{BBG07,GruJBO10,PalNusHalBur07,ZanSchHal09b}).


\subsection{Explicit inversion formula}

The reconstruction problem~\eqref{pr2} can be approached by various solution techniques.
Among these techniques, the derivation of the explicit inversion formulas
of the so-called back-projection type is particularly appealing.
A numerical realization of these formulas typically gives reconstruction algorithms
that are accurate and robust, and at the same time  are faster than iterative approaches.

An inversion formula consists of an explicitly given operator
$\Gcl_d$ that recovers the function $f$ from the data $u$.
Such formulas are currently known only for special domains
and only for the full view data, i.e. $u$ must be given for all $x\in\pOm$.
In this paper, we consider the formula that first has been derived
in~\cite{XuWan05,Kun07,BBG07}.
In addition to the data $u$, the formula $\Gcl_d$ also depends on the boundary $\pOm$ of the domain
$\Om\subsetneq \R^d$ and on the reconstruction point $x_0\in\Om$.
The structure of the formula further depends on whether the spatial dimension $d$ is even or odd.

If $d\geq 2$ is an even integer, then
\begin{equation}\label{rec-U1}
\Gcl_d(\pOm,u,x_0):=
  \kdb  \int_{\partial\Om} \inner{\nu_x,x_0-x}
    \int_{ \abs{x_0-x} }^\infty
    \frac{ \kl{ \pdt \Dclt^{ (d-2)/2 } t^{-1}  u } (x,t) }
    { \sqrt{ t^2- \abs{x_0-x}^2 } }
    \,\df t\,
    \df s(x) \,.
\end{equation}
Here
$\ds\kdb:= (-1)^{(d-2)/2}/\pi^{d/2}$ is a constant,
$\nu_x$ denotes the outward pointing unit normal to $\pOm$,
and $\Dclt:=(2t)^{-1}\partial_t$ is the differentiation operator with respect to $t^2$.
Further,  $\inner{\,\cdot\, , \,\cdot\,}$ and $\abs{\,\cdot\,}$ denote the standard inner product and the
corresponding Euclidian norm on $\R^d$, respectively.

In the case of odd dimension $d\geq 3$, the formula $\Gcl_d$ is defined as follows:
\begin{equation}\label{rec-U2}
\Gcl_d(\pOm,u,x_0):=
  \kdb  \int_{\partial\Om}
    \frac{\inner{\nu_x, x_0-x}}{ \abs{x_0-x}  }
    \kl{ \pdt \Dclt^{ (d-3)/2 } t^{-1}  u } (x,\abs{x_0-x})
    \,\df s(x) \,,
\end{equation}
with constant $\ds\kdb:= (-1)^{(d-3)/2}/( 2\pi^{ (d-1)/2 })$.

The  formula $\Gcl_d$ has been introduced in~\cite{XuWan05}
for dimension $d=3$, and in~\cite{BBG07} for dimension $d=2$.
In~\cite{Kun07}, it has been studied for the case when $\Om$
is a ball in arbitrary dimension.
Further, in~\cite{Nat12,Hal13,Hal14}, it has been shown that for any elliptical domain  $\Om$,  the formula
$\Gcl_d$ exactly recovers any smooth function $f$ with support in  $\Om$ from data $u = \Uo f$.
In~\cite{HalPer15}, it was shown, that the same result also holds for parabolic domains $\Om$ with $d=2$.
The formula $\Gcl_d$ in arbitrary spatial dimension $d\geq 2$ on certain quadric hypersurfaces, including the parabolic ones,
has been analyzed in~\cite{HalPer15b}.

It should be noted that the formula $\Gcl_d$ can be in fact used for any convex bounded domain $\Om$.
Then, however, the formula does not recover the function $f$
exactly, and it introduces an approximation error. The form of this error has been analyzed in~\cite{Nat12,Hal13,Hal14}.
Numerical experiments indicate
that this error is rather low for domains that can be well approximated by elliptic domains.
\rtb
This is also suggested by the microlocal analysis in~\cite{Lin14}.
\rte


The operator $\Ucl$ can be defined for functions $f\in\Lcl^2\kl{\Om_0}$,
where $\Om_0$ is an open set with $\overline{\Om_0} \subseteq \Om$.
Define the image of $\Lcl^2\kl{\Om_0}$ under the operator
$\Ucl$ as $\Ra:=\Ucl\kl{ \Lcl^2\kl{\Om_0} }$. Then it is known
(see, e.g., \cite{KucKun08,SteUhl09,KucKun11})
that $\Ra$ is a closed subspace of
$\Lcl^2(\pOm\times(0,\infty))$, and therefore, we will treat $\Ra$ as a Hilbert space with the scalar product of $\Lcl^2(\pOm\times(0,\infty))$.
Moreover, the operator $\Ucl\colon \Lcl^2\kl{\Om_0} \to\Ra $ is bounded, and it has the bounded inverse
$\Ucl^{-1}\colon \Ra \to \Lcl^2\kl{\Om_0}$.

In the following, we will work with functions $f\in\Lcl^2\kl{\Om_0}$, and we will assume that the domain $\Om$ is such that
the formula $\Gcl_d$ gives exact recovery of the function $f$ from its wave data
$u=\Uo f$, i.e. it holds that
\begin{equation}\label{exrec}
   f = \Gcl_d\, \Uo f.
\end{equation}
As we already mentioned, this is, for example, the case for circular and elliptical domains.
In such a situation, it can be shown that $\Gcl_d$ is a continuous extension of
$\Ucl^{-1}$  to $\Lcl^2(\pOm \times (0, \infty))$.


\subsection{Limited view problem}\label{s:LVP}


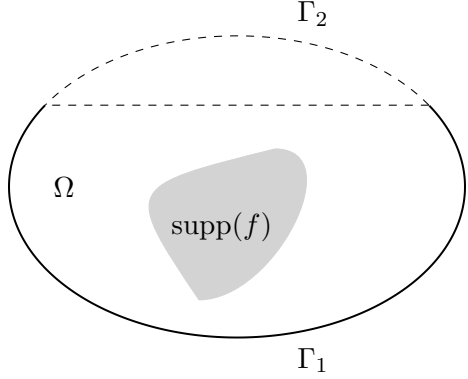
\begin{figure}[t]
		\centering
		\begin{tikzpicture}
			\draw[,domain=pi/2+1:2*pi+pi/2-1,samples=300,line width=0.75pt,variable=\x] plot({3*cos(\x r)},{2*sin(\x r)});
			\draw[dashed,domain=pi/2-1:pi/2+1,samples=300,variable=\x] plot({3*cos(\x r)},{2*sin(\x r)});
			\draw[dashed] ($3*cos((pi/2-1) r)*(1,0)+2*sin((pi/2-1) r)*(0,1)$)--($3*cos((pi/2+1) r)*(1,0)+2*sin((pi/2+1) r)*(0,1)$);
			\draw(1,2) node[anchor=south]{$\Gamma_2$};
			\draw(1,-2) node[anchor=north]{$\Gamma_1$};
			\draw(-2,0) node[anchor=east]{$\Omega$};
			\draw[fill=lightgray,draw=lightgray] (-0.5,-1.5) .. controls (0.5,-1.5) and (1.5,0.5) .. (0.5,0.5) .. controls (-1.5,0) .. (-0.5,-1.5);
			\draw(-0.2,-0.2) node[anchor=north]{$\mathrm{supp}(f)$};
		\end{tikzpicture}
		\caption{  Setting of LVP.  }
		\label{fig:edaten}
\end{figure}


In practice, the wave data $u$ is frequently given on a subset $\Gm_1$ of the boundary $\pOm$ (Figure~\ref{fig:edaten}).
This subset $\Gm_1$, called observation boundary, defines the so-called detection region
$\Dbb\kl{ \Gm_1 }$ (see, e.g., \cite{PNH07,KucKun08}).
If $\overline{\supp(f)} \subsetneq \Dbb\kl{ \Gm_1 } $,
then the function $f$ in~\eqref{pr2} can be stably recovered from data on $\Gm_1$.
The detection region $\Dbb\kl{ \Gm_1 }$ contains points $x$ such that any line going through $x$ intersects $\Gm_1$.
For example, if $\Gm_1$ is a spherical or elliptical cap, then
$\Dbb\kl{ \Gm_1 } = \conv\kl{ \Gm_1 }$.

Let us mathematically specify the stable recovery of $f$. Let $\Om_1$ be an open set with
$\overline{\Om_1} \subsetneq \Dbb\kl{ \Gm_1 } $.
\rtba
The stable recovery holds for $f \in \Lcl^2\kl{\Om_1} $, and it is formulated in the following theorem.
Note that the space $ \Lcl^2\kl{\Om_1} $ is identified 
with the set of all functions in  $\Lcl^2(\R^d)$ that vanish outside  of  $\overline{\Om_1}$.

\begin{thm}\label{Th_sr}
The operator $\Ucl_1 \colon \Lcl^2\kl{\Om_1} \to \Lcl^2\kl{ \Gm_1 \times (0, \infty)}$ is well defined 
and bounded. Moreover,  it has  bounded inverse $\Ucl_1^{-1}\colon \Rb \to \Lcl^2\kl{\Om_1}$, where  
$\Rb := \Ucl_1 (\Lcl^2\kl{\Om_1}) \subseteq \Lcl^2 (\Gm_1 \times (0, \infty))$ denotes the range of $\Ucl_1$.
In particular, $\Rb$ is closed.
\end{thm}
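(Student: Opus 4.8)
The plan is to deduce everything from the known properties of the full-view operator $\Ucl$ stated just before the theorem, combined with the geometric hypothesis $\overline{\Om_1}\subsetneq\Dbb(\Gm_1)$ and the stable-recovery principle for the limited view. First I would fix an open set $\Om_0$ with $\overline{\Om_1}\subseteq\Om_0$ and $\overline{\Om_0}\subseteq\Om$, so that the already-quoted facts apply: $\Ucl\colon\Lcl^2(\Om_0)\to\Ra$ is bounded with bounded inverse, and $\Ra$ is closed in $\Lcl^2(\pOm\times(0,\infty))$. Restricting to the subspace $\Lcl^2(\Om_1)\subseteq\Lcl^2(\Om_0)$ and noting that $\Ucl_1 f$ is simply the restriction of $\Ucl f$ to $\Gm_1\times(0,\infty)$, boundedness of $\Ucl_1\colon\Lcl^2(\Om_1)\to\Lcl^2(\Gm_1\times(0,\infty))$ is immediate, since restriction of a function to a subset of its domain does not increase its $\Lcl^2$ norm.

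The substantive point is injectivity of $\Ucl_1$ on $\Lcl^2(\Om_1)$ together with a stability estimate $\norm{f}_{\Lcl^2(\Om_1)}\leq C\,\norm{\Ucl_1 f}_{\Lcl^2(\Gm_1\times(0,\infty))}$. Here I would invoke the stable-recovery result for the limited view problem already cited in the paper (the references \cite{KucKun08,SteUhl09,KucKun11} and the discussion preceding Figure~\ref{fig:edaten}): when $\overline{\supp(f)}\subseteq\overline{\Om_1}\subsetneq\Dbb(\Gm_1)$, the microlocal/visibility analysis guarantees that all singularities of $f$ are "visible" from $\Gm_1$, yielding precisely such a stability estimate on the relevant Sobolev scale; on $\Lcl^2(\Om_1)$ this specializes to the claimed bound. (If one wishes to stay self-contained, an alternative is an energy/unique-continuation argument: $\Ucl_1 f=0$ forces $u$ and $\partial_t u$ to vanish on $\Gm_1\times(0,\infty)$, and by finite speed of propagation plus Tataru-type unique continuation $u\equiv 0$ on the domain of influence, hence $f=0$; combined with a closed-graph/compactness argument this upgrades injectivity to the norm estimate. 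But the cleanest route is to cite the known LVP stability.)

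Granting the stability estimate, the remaining conclusions are formal. Injectivity is contained in the estimate. For closedness of $\Rb=\Ucl_1(\Lcl^2(\Om_1))$: take $g_k=\Ucl_1 f_k\to g$ in $\Lcl^2(\Gm_1\times(0,\infty))$; the estimate shows $(f_k)$ is Cauchy in $\Lcl^2(\Om_1)$, hence $f_k\to f$, and boundedness of $\Ucl_1$ gives $g=\Ucl_1 f\in\Rb$. Finally, $\Ucl_1^{-1}\colon\Rb\to\Lcl^2(\Om_1)$ is well defined by injectivity, linear, and bounded with norm at most $C$ directly by the estimate; equivalently one may quote the bounded inverse theorem once $\Rb$ is known to be a Banach (closed) subspace.

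I expect the main obstacle to be exactly the stability estimate: boundedness and the purely functional-analytic deductions (injectivity $\Rightarrow$ closed range $\Rightarrow$ bounded inverse) are routine, but the quantitative lower bound $\norm{\Ucl_1 f}\gtrsim\norm{f}$ rests on the nontrivial visibility condition $\overline{\Om_1}\subsetneq\Dbb(\Gm_1)$ and is where the real content of the theorem lies. In the write-up I would isolate this as the single analytic input, cite the appropriate limited-view stability reference, and keep the rest as a short abstract argument.
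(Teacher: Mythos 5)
Your proposal is correct and follows the same overall architecture as the paper's proof: establish a two-sided estimate $a\norm{\Ucl_1 f}\leq\norm{f}\leq b\norm{\Ucl_1 f}$, obtain the stability (lower) bound from the visibility condition via the limited-view literature, and then deduce closedness of $\Rb$ and boundedness of $\Ucl_1^{-1}$ by the standard Cauchy-sequence argument. The one genuine difference is in the boundedness half: you derive it by restriction from the already-asserted boundedness of the full-view operator $\Ucl\colon\Lcl^2\kl{\Om_0}\to\Ra$, whereas the paper proves it directly by splitting $\Ucl_1 f=\chi_{[0,T]}\Ucl_1 f+\chi_{(T,\infty)}\Ucl_1 f$ with $T$ exceeding the diameter of $\Om$, bounding the finite-time part via the fact that $\Ucl_1$ is a sum of two Fourier integral operators of order zero and the tail via explicit solution formulas; your route is shorter but leans on the unproved earlier assertion, while the paper's is self-contained and makes visible why the infinite time interval causes no trouble. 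For the stability estimate the paper cites a specific result (Theorem~3.4 of the Haltmeier--Nguyen paper on iterative methods) rather than the general visibility references, but this is the same analytic content you identify as the crux. One caution: your parenthetical claim that unique continuation gives injectivity which a ``closed-graph/compactness argument'' then upgrades to the norm estimate is not correct as stated --- injectivity alone never yields a lower bound without additional structure (e.g.\ that $\Ucl_1$ is elliptic modulo a compact operator), and since you rightly discard this alternative in favour of citing the stability result, it does not affect the validity of the proof.
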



\begin{proof}
It is sufficient to show the two-side estimate
\begin{equation} \label{eq:two}
\forall f \in C_c^\infty (\Om_1) \colon \quad 
a \norm{\Ucl_1 f}_{\Lcl^2} \leq \norm{f}_{\Lcl^2}  \leq b \norm{\Ucl_1 f}_{\Lcl^2} \,,
\end{equation}
for some constants $a, b \in (0, \infty)$. The claims then follow by continuous extension. 

To show the left hand estimate, we decompose 
$ \Ucl_1 f =  \chi_{[0,T]}  \Ucl_1 f   + \chi_{(T, \infty)} \Ucl_1 f$,
where $T$ is larger than the diameter of $\Om$.
Since the operator  $\Ucl_1$ is the sum of two Fourier integral operators  of order zero (see \cite{HalNgu17}),
we have $ \lVert \chi_{[0,T]} \Ucl_1 f\rVert_{\Lcl^2} \leq  c_1 \lVert f\rVert_{\Lcl^2}$ for some constant $c_1$.
Moreover, the explicit formulas for $\Ucl_1 f$ (see, e.g, \cite{CouHil62}) imply also that
$ \lVert \chi_{(T, \infty)} \Ucl_1 f \rVert_{\Lcl^2} \leq c_1 \lVert f\rVert _{\Lcl^2}$, which gives the 
left hand side estimate in \eqref{eq:two}.

The right hand side estimate can be found in~\cite[Theorem~3.4]{HalNgu17}. The required visibility condition
is satisfied for 
$f \in \Lcl^2\kl{\Om_1} $.
\end{proof}


It is worth to mention that despite the boundedness of $\Ucl^{-1}_1$, no theoretically exact direct solution methods
are available.
Let us note that if the condition $\overline{\Om_1} \subsetneq \Dbb\kl{ \Gm_1 } $ is not satisfied, then
the visibility condition in~\cite[Theorem~3.4]{HalNgu17} is also not valid, and the
inverse of the operator $\Ucl_1$ is severely ill-posed (see, e.g., \cite{HalNgu17,SteUhl09,KucKun11}).


\rte


Denote $\Rc := \Lcl^2\kl{ \Gm_2\times (0,\infty) }$.
From the boundness of the operator $\Ucl\colon \Lcl^2\kl{ \Om_0 } \to \Ra $, we can deduce the boundness of the operator
$\Ucl_2\colon \Lcl^2\kl{ \Om_1 } \to \Rc $. We will use this for the data extension operator below.

\rtba
Recall that in order to give the exact reconstruction, the formula $\Gcl_d$ requires the full view wave data $u$,
which is given for all $x\in\pOm$ (see~\eqref{exrec}).
\rte
In spite of the \rtba above discussed \rte stable
recoverability of $f\in \Lcl^2\kl{ \Om_1 }$ from equation~\eqref{pr2},
the use of formula $\Gcl_d$ on the \rtba limited view \rte data $u$ given on $\Gm_1\subsetneq\pOm$
leads to serious artifacts in the reconstruction;
see, e.g., \cite{HalPer15}, where the numerical results of the application of $\Gcl_2$ on finite parabolas are presented.
The reconstruction artefacts in the case of the limited view data are also discussed
in~\cite{XuWanAmbKuc04,FriQui13,SteUhl13,BarFriNgu15,FriQui15,Ngu15}.

At the same time,
the use of formula $\Gcl_d$ for reconstructing function $f$ can be attractive from various points of view. For example, as we already pointed out,
the reconstruction using a numerical realization of $\Gcl_d$ is
\rtb
faster than iterative reconstruction algorithms.
\rte
Another point may be connected with the nature of the software development.
Namely, having already a tested and trusted computer code of the numerical realization of formula $\Gcl_d$, it could be
\rtb
tempting
\rte
to develop its extensions for the LVP.

An extension of the limited view data $u$ from the observable part of the boundary $\Gm_1\subsetneq\pOm$ to the whole
boundary $\pOm$ may give a possibility to improve the reconstruction quality of the formula $\Gcl_d$. In this paper,
we propose to realize this extension using the operator learning approach, which we consider in the next section.

	
\section{Data extension using operator learning approach}\label{s:OL}

The extension of the limited view data to the whole boundary can be in principle done by the extension operator that we define
in the next subsection. This operator is however not explicitly known, and we propose an operator learning approach to
construct its approximation in subsection~\ref{s:LEO}. In subsection~\ref{s:CLA}, we discuss computational aspects of the
proposed learned approximation of the extension operator.

\subsection{Extension operator}\label{s:EO}

\rtb
Let us recall that $\Gm_1\subsetneq \pOm $ is the observation boundary,
$\Dbb\kl{ \Gm_1 }$ is the corresponding detection region defined in Section~\ref{s:LVP},
$\Gm_2 = \pOm\setminus \Gm_1$ is the unobservable part of the boundary,
and $\Om_1$ is an open set with $\overline{\Om_1} \subsetneq \Dbb\kl{ \Gm_1 } $.
Further, let us remind that the operators $\Ucl_1$ and $\Ucl_2$ are defined in~\eqref{U1U2}.
\rte


The operator $\Ao\colon \Rb\to \Rc$ that maps functions
$\Ucl_1 f$ to functions $\Ucl_2 f$ for $f\in\Lcl^2\kl{ \Om_1 }$
realizes the extension of the limited view data $u_1=\Ucl_1 f$ to the unobservable part of the boundary $\Gm_2$.
This operator $\Ao$ can be written as $\Ao = \Uo_2 \circ \Uo_1^{-1} $.
Because of this representation
and the assumptions on $\Gamma_1$ and $\Omega_1$,
the operator $\Ao$ is a linear continuous operator
\rtba
as a superposition of linear continuous operators.
Recall that the continuity (or boundness) of the operators $\Ucl_1^{-1}$ and $\Ucl_2$ is discussed in
Section~\ref{s:LVP}.
\rte


With the introduced extension operator $\Ao$, one could extend the limited view data $u_1$ to the whole boundary $\pOm$,
and then use the formula $\Gcl_d$ on this extended data. In this way, the disadvantages of the use of the formula $\Gcl_d$
on the limited view data can be eliminated. However, the form of the operator $\Ao$ is not explicitly known.

\subsection{Proposed learned extension operator}\label{s:LEO}

In this paper, we propose to construct an operator $\hAo$ that approximates the operator $\Ao$.
The role of the parameter $n\in\N\cup\Set{0}$ is described below.
The approximate operator $\hAo$
must satisfy the following two requirements. The first requirement concerns the approximation quality: $\hAo u_1$ must be close to
$\Ao u_1$. The second requirement is related to the computational effort of the numerical evaluation of $\hAo u_1$. This evaluation
must be fast such that the evaluation of the formula $\Gcl_d$ on the extended limited view data with the help of $\hAo$ remains to
be computationally efficient.

Our construction of the approximate operator $\hAo$ is
\rtb inspired \rte
by the statistical learning approach (see, e.g., \cite{HasTibFri09}).
For $i=1,\ldots,n$, consider training functions $f_i\colon\Om_1\to\R$.
For each training function $f_i$, we can determine the corresponding
wave data
$u_{1,i}:= \Uo_1 f_i$, $u_{2,i}:= \Uo_2 f_i$.
By the definition of the extension operator $\Ao$ we have that $u_{2,i}=\Ao u_{1,i}$. In the context of statistical learning, the set
$ \Zcl := \Set{  \kl{  u_{1,i}, \Ao u_{1,i} },\;  i=1,\ldots,n  } $
is called a training set. Define for future reference the set $\Uv_{1,n}:=\Set{u_{1,i},\; i=1,\ldots,n}$.

So, how to construct (or, using the terminology of the statistical learning, how to learn) an approximation $\hAo u_1$ of $\Ao u_1$
using the training set $\Zcl$?
\rtba
It should be noted that many
statistical learning algorithms are designed for learning
a small number of
scalar-valued functions.
These algorithms are not applicable in our case because the function that we need to learn is an operator.
Recently, the development of the statistical learning methods for learning vector-valued functions
and also functions with values in function spaces, i.e. operators, has been started
(see, e.g., \cite{MicPon05,AlvRosLaw12}). For obtaining good results, these methods require
an a priori knowledge of the dependence between different components of the output vector
that is given by the function to be learned. This knowledge is not readily available in our case.
However, as we observe below, the linear structure of the extension operator $\Ao$ that we want
to learn allows to employ a projection operator for the learning.
\rte


For any $n\in\N\cup\Set{0}$, define the linear subspace
\begin{equation}\label{VN}
  V_n := \Set{\sum_{j=1}^n c_j u_{1,j},\; c_j\in\R  },\; V_0:=\Set{ 0 } \subseteq\Rb,
\end{equation}
and let $\Pcl_n\colon \Lcl^2\kl{ \Gm_1\times(0,\infty) } \to V_n$ be the orthogonal projection on $V_n$ in
$\Lcl^2\kl{ \Gm_1\times(0,\infty) }$.
Then we define the learned approximation $\hAo u_1$ as follows:
\begin{equation}\label{hA}
  \hAo u_1 := \Ao \Pcl_n u_1.
\end{equation}
%

Note that $V_n\subseteq \Rb$, and therefore, the operator composition $\Ao \Pcl_n $ is well-defined, and
$\hAo\colon \Lcl^2\kl{ \Gm_1\times (0,\infty) } \to\Rc $ is bounded.
Further, note that for all $u_1\in  \Lcl^2\kl{ \Gm_1\times (0,\infty) } $, $\hat \Ao_0 u_1 = 0\in\Rc$.

\subsection{Computation of learned approximation}\label{s:CLA}

How to compute the learned approximation $\hAo u_1$ using the training set $\Zcl$ for $n\geq 1$? First of all,
observe that since $\Pcl_n u_1\in V_n$, the projection
$\Pcl_n u_1$ has the following representation:
\begin{equation}\label{Pnrpr}
  \Pcl_n u_1 = \sum\limits_{j=1}^n c_j u_{1,j},
\end{equation}
where the coefficients $c_j\in\R$ can be determined from the conditions
$ \inner{ \Pcl_n u_1 -u_1, u_{1,i} } = 0 $ for $i=1,\ldots,n$. These conditions can be written in the form of the system of linear
equations for the coefficients $c_j$
\begin{equation}\label{sysline}
   \sum\limits_{j=1}^n c_j \inner{ u_{1,i},u_{1,j} }  =  \inner{ u_1,u_{1,i} },\;
	 i=1,\ldots,n.
\end{equation}
%

Denote the matrix corresponding to the above linear system as $\Pm_n$, i.e. the elements of $\Pm_n$ are
$\kl{ \Pm_n }_{ij} = \inner{ u_{1,i},u_{1,j} }$.
Further, denote the vector of unknowns as $\cv_n$, and the right-hand side as $\uv_n$, i.e.
$ \kl{ \cv_n }_i = c_i $ and $ \kl{ \uv_n }_i = \inner{ u_1, u_{1,i} } $.

The matrix $\Pm_n$ is the Gram matrix of the functions in $\Uv_{1,n}$, and it is invertible
if the set $\Uv_{1,n}$ is linearly independent. Since the operator $\Uo_1$ is invertible, the set $\Uv_{1,n}$ is linearly independent if the
set $\Set{ f_i,\; i=1,\ldots,n }$ is linearly independent, and for the following, we assume that this is the case.

Note that the matrix $\Pm_n$ does not depend on the limited view wave data $u_1$ that we want to extend. Therefore, the inverse
matrix $\Pm_n^{-1}$ can be precomputed once the set of the learning inputs $\Uv_{1,n}$ is given. This will make the determination
of the coefficients $c_j$ very fast.

Finally, with the coefficients $c_j$ in~\eqref{Pnrpr},
i.e. $\cv_n = \Pm_n^{-1} \uv_n$,
the approximation $\hAo u_1$ is calculated as follows:
$$
  \hAo u_1 = \Ao\Pcl_n u_1 =
  \Ao\kl{  \sum\limits_{j=1}^n c_j u_{1,j}  } = \sum\limits_{j=1}^n c_j u_{2,j}
	= \sum\limits_{j=1}^n c_j \Uo_2 f_j.
$$
%

	
\section{Approximate reconstructions and their error analysis}\label{s:EA}

For obtaining an approximate reconstruction of $f$ using
the limited view data $u_1 = \Uo_1 f$ and the formula $\Gcl_d$, we can now proceed as follows.
First, we extend the limited view data $u_1$ to the whole boundary $\pOm$ using the learned extension operator $\hAo$
in this way:
$$
   \ule(x,t) =
	\begin{cases}
		u_1(x,t) & \text{if }  x\in\Gm_1, \\
		\bigl(   \hAo u_1   \bigr)     (x,t) & \text{if }  x\in\Gm_2.
	\end{cases}
$$
And then we apply the formula $\Gcl_d$ to this extended wave data $\ule$
in order to obtain an approximate reconstruction
$\fle$:
\begin{equation}\label{hf2}
  \fle = \Gcl_d \ule.
\end{equation}

Note that $\uze$ is obtained by extending the limited view data $u_1$ to the whole boundary $\pOm$ with zero values
on $\Gm_2$. As we already discussed, the corresponding approximate reconstruction $\fze$
contains significant errors, and it is desirable
to have better reconstructions of $f$ using $u_1$.
Additionally, one may desire that the reconstruction $\fle$ improves as $n$ increases.

In the following theorem, we estimate the $\Lcl^2$-error of the approximation of $\Ao u_1$ by $\hAo u_1$ and of the approximation
of $f$ by $\fle$. From the derived estimates, we see that the above aims can be realized if the
training functions $f_i$, $i=1,\ldots,n$, are chosen appropriately.


\begin{thm}\label{Th1}

Let a set of linearly independent training functions
$\Set{f_i,\; i=1,\ldots,n } \subseteq \\ \Lcl^2\kl{ \Om_1 }   $ be given, and denote
$W_n := \Set{  \sum_{i=1}^n c_i f_i,\;  c_i\in\R  }$,
$W_0 := \Set{ 0 } \subseteq \Lcl^2\kl{ \Om_1 } $.
Define the training limited view wave data
$u_{1,i} := \Uo_1 f_i$, the corresponding linear subspace $V_n$ in~\eqref{VN}, and the learned extension
operator $\hAo$ in~\eqref{hA}. Consider a function $f\in\Lcl^2\kl{ \Om_1 }$, its limited view wave data $u_1:=\Uo_1 f$,
and its approximation $\fle$ defined in~\eqref{hf2}. Then the following $\Lcl^2$-error estimate
for the unobservable data holds:
\begin{equation}\label{ere1}
   \norm{ \Ao u_1 -\hAo u_1 } \leq \norm{ \Ao }\cdot \norm{\Uo_1}\cdot \min\limits_{g\in W_n} \norm{ f-g }.
\end{equation}
If additionally, the domain $\Om$ is such that~\eqref{exrec} holds, then we have the following $\Lcl^2$-error estimate for the reconstruction:
\begin{equation}\label{ere2}
   \norm{ f - \fle } \leq \norm{ \Gcl_d }\cdot \norm{ \Ao }\cdot \norm{\Uo_1}\cdot \min\limits_{g\in W_n} \norm{ f-g }.
\end{equation}
\end{thm}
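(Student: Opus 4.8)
The plan is to derive both estimates from the single structural identity $\hAo u_1 = \Ao \Pcl_n u_1$ together with the fact that $\Pcl_n$ is the orthogonal projection onto $V_n = \Uo_1(W_n)$, so that $u_1 - \Pcl_n u_1$ is the best approximation error of $u_1$ from $V_n$ in $\Lcl^2(\Gm_1\times(0,\infty))$.

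\textbf{Step 1: reduce to a projection error.} Since $\Ao$ is linear and bounded (as established in Section~\ref{s:EO}) and $\hAo u_1 = \Ao\Pcl_n u_1$, I would write
\begin{equation*}
  \norm{\Ao u_1 - \hAo u_1} = \norm{\Ao(u_1 - \Pcl_n u_1)} \leq \norm{\Ao}\cdot\norm{u_1 - \Pcl_n u_1}\,.
\end{equation*}
So it remains to bound $\norm{u_1 - \Pcl_n u_1}$.

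\textbf{Step 2: control the projection error by a best approximation in $W_n$.} Because $\Pcl_n$ is the orthogonal projection onto $V_n$, for \emph{every} $v\in V_n$ we have $\norm{u_1 - \Pcl_n u_1}\leq\norm{u_1 - v}$. Now $u_1 = \Uo_1 f$ and, by the definition of $V_n$ in~\eqref{VN}, every $v\in V_n$ has the form $v = \Uo_1 g$ with $g\in W_n$ (since $V_n = \Set{\sum_j c_j u_{1,j}} = \Uo_1(W_n)$, using $u_{1,j}=\Uo_1 f_j$ and linearity of $\Uo_1$). Hence for each $g\in W_n$,
\begin{equation*}
  \norm{u_1 - \Pcl_n u_1}\leq\norm{\Uo_1 f - \Uo_1 g} = \norm{\Uo_1(f-g)}\leq\norm{\Uo_1}\cdot\norm{f-g}\,,
\end{equation*}
and taking the minimum over $g\in W_n$ (the minimum is attained because $W_n$ is finite-dimensional, hence closed) yields~\eqref{ere1}. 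Here I would note the edge case $n=0$: then $W_0=\Set{0}$, $V_0=\Set{0}$, $\Pcl_0 = 0$, $\hat\Ao_0 u_1 = 0$, and the estimate reads $\norm{\Ao u_1}\leq\norm{\Ao}\norm{\Uo_1}\norm{f}$, which is consistent.

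\textbf{Step 3: propagate the error through the reconstruction formula.} Assume now that~\eqref{exrec} holds, i.e. $f = \Gcl_d\,\Uo f$. The key observation is that the extended data $\ule$ agrees with $\Uo f$ on $\Gm_1\times(0,\infty)$ and equals $\hAo u_1 = \Ao\Pcl_n u_1$ on $\Gm_2\times(0,\infty)$, whereas the true full data $\Uo f$ restricted to $\Gm_2$ equals $\Uo_2 f = \Ao u_1$. Therefore $\Uo f - \ule$ is supported on $\Gm_2\times(0,\infty)$ and there equals $\Ao u_1 - \hAo u_1$, so $\norm{\Uo f - \ule}_{\Lcl^2(\pOm\times(0,\infty))} = \norm{\Ao u_1 - \hAo u_1}_{\Lcl^2(\Gm_2\times(0,\infty))}$. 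Using $f = \Gcl_d\Uo f$ and $\fle = \Gcl_d\ule$ together with linearity and boundedness of $\Gcl_d$ on $\Lcl^2(\pOm\times(0,\infty))$ (recall $\Gcl_d$ extends $\Uo^{-1}$ continuously),
\begin{equation*}
  \norm{f - \fle} = \norm{\Gcl_d(\Uo f - \ule)}\leq\norm{\Gcl_d}\cdot\norm{\Ao u_1 - \hAo u_1}\,,
\end{equation*}
and substituting~\eqref{ere1} gives~\eqref{ere2}.

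\textbf{Main obstacle.} The computations above are essentially routine once the identifications are in place; the only genuine point requiring care is Step~3's bookkeeping of which operator acts on which piece of the boundary — specifically verifying that $\Uo f$ and $\ule$ literally coincide on $\Gm_1\times(0,\infty)$ (so the error lives entirely on $\Gm_2$) and that $\norm{\cdot}_{\Lcl^2(\Gm_2\times(0,\infty))}\leq\norm{\cdot}_{\Lcl^2(\pOm\times(0,\infty))}$ lets us feed the $\Rc$-norm bound from~\eqref{ere1} into the $\Gcl_d$-estimate without loss. I would also double-check that $\Gcl_d$ is indeed applied as a bounded operator here rather than merely as a pointwise formula; this is exactly the content of the remark that $\Gcl_d$ is a continuous extension of $\Ucl^{-1}$ to $\Lcl^2(\pOm\times(0,\infty))$, so it is legitimate to use $\norm{\Gcl_d}$ and invoke linearity on the difference $\Uo f - \ule$.
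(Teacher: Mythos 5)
Your proposal is correct and follows essentially the same route as the paper's proof: bound $\norm{\Ao u_1-\hAo u_1}$ by $\norm{\Ao}$ times the projection error, identify $V_n=\Uo_1(W_n)$ to convert the best approximation in $V_n$ into one in $W_n$, and then observe that $\Uo f-\ule$ is supported on $\Gm_2$ so the reconstruction error is controlled by $\norm{\Gcl_d}$ times the data-extension error. The extra care you take with the $n=0$ edge case and the norm identifications is consistent with, and slightly more explicit than, the paper's argument.
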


\begin{proof}

We first prove~\eqref{ere1}. From the definition of the operator $\hAo$, we have that
%
\begin{equation}\label{AuhAu}
  \norm{ \Ao u_1 - \hAo u_1 } = \norm{ \Ao\, \Uo_1 f - \Ao\, \Pcl_n\, \Uo_1 f }\leq
	\norm{\Ao} \cdot \norm{ \Uo_1 f - \Pcl_n\, \Uo_1 f }.
\end{equation}
From the properties of the projection operators, we also have that
\begin{equation}\label{U1fPN}
  \norm{ \Uo_1 f - \Pcl_n\, \Uo_1 f } = \min\limits_{h\in V_n} \norm{ \Uo_1 f - h }.
\end{equation}

For an element $h\in V_n$, there are unique constants $c_i\in\R$, $i=1,\ldots,n$ such that
$$
  h = \sum_{i=1}^n c_i\, u_{1,i} = \sum_{i=1}^n c_i\, \Uo_1 f_i =
	\Uo_1\kl{ \sum_{i=1}^n c_i\, f_i },
$$
and therefore, there exists an element $g\in W_n$ such that $h=\Uo_1 g$. Using this fact, we can
estimate
\begin{equation}\label{U1fh}
  \min\limits_{h\in V_n} \norm{ \Uo_1 f - h } = \min\limits_{g\in W_n} \norm{ \Uo_1 f - \Uo_1 g }
	\leq \norm{ \Uo_1 } \cdot \min_{g\in W_n} \norm{f-g}.
\end{equation}

Then combining~\eqref{AuhAu},\eqref{U1fPN},\eqref{U1fh}, we obtain estimate~\eqref{ere1} for the $\Lcl^2$-error
$ \bigl\| \Ao u_1 - \hAo u_1   \bigr\| $.

Now, consider~\eqref{ere2}. Using~\eqref{exrec} and~\eqref{hf2}, we have
\begin{equation}\label{hhf2}
  \norm{ f-\fle } = \norm{ \Gcl_d\,\Uo f - \Gcl_d\ule }\leq \norm{\Gcl_d} \cdot \norm{ \Uo f  - \ule}.
\end{equation}

Since $ \kl{ \Uo f } (x,t) = \ule(x,t) = u_1(x,t) $ for $x\in\Gm_1$, then
\begin{equation}\label{Ufhu}
  \norm{ \Uo f  - \ule} = \norm{ \Uo_2 f -\hAo u_1 } = \norm{ \Ao u_1 - \hAo u_1 }.
\end{equation}

Thus, the error estimate~\eqref{ere2} is obtained from~\eqref{hhf2}, \eqref{Ufhu}, and the error estimate~\eqref{ere1}.
\end{proof}


\begin{rem}\label{rem1}

Let $\Qcl_n\colon \Lcl^2\kl{\Om_1}\to W_n$ be the orthogonal projection on $W_n$ in the space $\Lcl^2\kl{\Om_1}$.
Then, since we have that
$ \min\limits_{g\in W_n}  \norm{f-g} = \norm{ f- \Qcl_n f }  $, we can write
$ \norm{ f- \Qcl_n f }  $ instead of $ \min\limits_{g\in W_n}  \norm{f-g} $ in~\eqref{ere1} and~\eqref{ere2}.

\end{rem}


As we see from Theorem~\ref{Th1}, the estimates of the $\Lcl^2$-errors given by our learning procedure depend on the minimal
distance from the unknown function $f$ to the linear subspace $W_n$ defined by the training functions $f_i$. This gives us
an indication for the choice of the training functions. Namely, one should choose the training functions $f_i$ such that the unknown
function $f$ can be well approximated by their linear combination.

Estimates~\eqref{ere1},\eqref{ere2} also allow us to state the condition for the exact approximation given by our learning procedure
and for the convergence of the learned approximation when the number of the training functions $n$ goes to infinity.
We present these conditions in the following two corollaries.

\begin{cor}\label{Cor1}

If $f\in W_n$, then the learned approximation $\hAo u_1$ and the reconstruction $\fle$ are exact, i.e.
$$
  \norm{ \Ao u_1 - \hAo u_1 } = \norm{ f - \fle } = 0.
$$

\end{cor}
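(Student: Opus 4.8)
The plan is to derive Corollary~\ref{Cor1} directly from the error estimates~\eqref{ere1} and~\eqref{ere2} of Theorem~\ref{Th1}, since the statement is exactly the degenerate case in which the approximation term on the right-hand side vanishes. First I would observe that if $f \in W_n$, then $f$ is itself a member of the set over which the minimum is taken, so $\min_{g \in W_n} \norm{f-g} \leq \norm{f-f} = 0$, and hence $\min_{g \in W_n} \norm{f-g} = 0$. Equivalently, using Remark~\ref{rem1}, $\Qcl_n f = f$ and $\norm{f - \Qcl_n f} = 0$.

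Next I would simply substitute this into the two bounds. Estimate~\eqref{ere1} gives
\begin{equation*}
  \norm{ \Ao u_1 - \hAo u_1 } \leq \norm{ \Ao }\cdot \norm{\Uo_1}\cdot 0 = 0,
\end{equation*}
and since a norm is nonnegative, $\norm{ \Ao u_1 - \hAo u_1 } = 0$. Because~\eqref{exrec} is assumed in the hypotheses leading to~\eqref{ere2} (or, if one prefers, one can note that $\Ao u_1 = \hAo u_1$ already forces $\ule = \Uo f$ on all of $\pOm \times (0,\infty)$ by~\eqref{Ufhu}, whence $\fle = \Gcl_d \ule = \Gcl_d \Uo f = f$ by~\eqref{exrec}), the same substitution in~\eqref{ere2} yields $\norm{ f - \fle } \leq \norm{\Gcl_d}\cdot\norm{\Ao}\cdot\norm{\Uo_1}\cdot 0 = 0$, so $\norm{f - \fle} = 0$ as well.

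Honestly, there is no real obstacle here: the corollary is a one-line consequence of Theorem~\ref{Th1}, and the only thing to be careful about is that the reconstruction estimate~\eqref{ere2} was stated under the additional hypothesis that $\Om$ satisfies~\eqref{exrec}. So in writing the proof I would either carry that hypothesis along implicitly (as the surrounding text already does when discussing $\fle$), or make it explicit. A slightly more self-contained alternative, which avoids invoking~\eqref{ere2} at all, is to argue that $f \in W_n$ implies $\Uo_1 f \in V_n$, hence $\Pcl_n \Uo_1 f = \Uo_1 f$, so $\hAo u_1 = \Ao \Pcl_n \Uo_1 f = \Ao \Uo_1 f = \Uo_2 f = \Ao u_1$; then $\ule = \Uo f$ everywhere and~\eqref{exrec} gives $\fle = f$. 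I would present the short substitution argument as the main proof, since it exposes Corollary~\ref{Cor1} as precisely the sharpness case of the theorem.
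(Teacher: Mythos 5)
Your proof is correct and follows exactly the route the paper intends: Corollary~\ref{Cor1} is stated as an immediate consequence of the estimates~\eqref{ere1} and~\eqref{ere2}, obtained by noting that $f\in W_n$ forces $\min_{g\in W_n}\norm{f-g}=0$. Your remark that the hypothesis~\eqref{exrec} must be carried along for the reconstruction part is a sensible point of care, but otherwise there is nothing to add.
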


\begin{cor}\label{Cor2}

If $\overline{ \bigcup\limits_{n\geq 1} W_n } = \Lcl^2\kl{ \Om_1 } $, then the learned approximation $\hAo u_1$ and the reconstruction
$\fle$ converge respectively to $\Ao u_1$ and $f$ as $n\to\infty$, i.e.
$$
  \lim\limits_{n\to\infty} \norm{ \Ao u_1 - \hAo u_1 } =
  \lim\limits_{n\to\infty} \norm{ f - \fle } = 0.
$$

\end{cor}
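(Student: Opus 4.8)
The plan is to obtain Corollary~\ref{Cor2} as a direct consequence of Theorem~\ref{Th1} together with the density hypothesis. The key observation is that the right-hand sides of the estimates \eqref{ere1} and \eqref{ere2} both contain the factor $\min_{g \in W_n} \norm{f - g}$, while the other factors, $\norm{\Ao}$, $\norm{\Uo_1}$ and $\norm{\Gcl_d}$, are fixed finite constants independent of $n$. So the whole argument reduces to showing that this minimal distance tends to zero as $n \to \infty$.

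First I would note that the subspaces $W_n$ are nested: since $\Set{f_i,\; i=1,\ldots,n} \subseteq \Set{f_i,\; i=1,\ldots,n+1}$, we have $W_n \subseteq W_{n+1}$ for all $n$, hence the sequence $n \mapsto \min_{g \in W_n} \norm{f - g}$ is nonincreasing. By Remark~\ref{rem1} this quantity equals $\norm{f - \Qcl_n f}$, the distance from $f$ to the closed subspace $\overline{W_n}$ in the Hilbert space $\Lcl^2\kl{\Om_1}$. Now fix $\veps > 0$. Since $\overline{\bigcup_{n \geq 1} W_n} = \Lcl^2\kl{\Om_1}$ and $f \in \Lcl^2\kl{\Om_1}$, there is some $m$ and some $g \in W_m$ with $\norm{f - g} < \veps$. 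By monotonicity, for every $n \geq m$ we then have $\min_{g' \in W_n} \norm{f - g'} \leq \min_{g' \in W_m} \norm{f - g'} \leq \norm{f - g} < \veps$. This shows $\min_{g \in W_n} \norm{f - g} \to 0$ as $n \to \infty$.

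Finally I would plug this into the estimates of Theorem~\ref{Th1}. Applying \eqref{ere1},
\[
  0 \leq \norm{ \Ao u_1 - \hAo u_1 } \leq \norm{ \Ao }\cdot \norm{\Uo_1}\cdot \min_{g\in W_n} \norm{ f-g } \xrightarrow[n\to\infty]{} 0,
\]
so $\lim_{n\to\infty} \norm{ \Ao u_1 - \hAo u_1 } = 0$; and since \eqref{exrec} is assumed here (it is needed to even define $\fle$ meaningfully and to invoke \eqref{ere2}), applying \eqref{ere2} gives
\[
  0 \leq \norm{ f - \fle } \leq \norm{ \Gcl_d }\cdot\norm{ \Ao }\cdot \norm{\Uo_1}\cdot \min_{g\in W_n} \norm{ f-g } \xrightarrow[n\to\infty]{} 0,
\]
whence $\lim_{n\to\infty} \norm{ f - \fle } = 0$ as claimed.

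I do not expect any serious obstacle here; the statement is essentially a packaging of Theorem~\ref{Th1} with an elementary density-plus-monotonicity argument. The one point that deserves a line of care is the nestedness of the $W_n$ (to get the monotone convergence rather than just $\liminf$), which relies on the training functions being added cumulatively — this is implicit in the setup where $f_1,\ldots,f_n$ is an initial segment of a fixed sequence — and on the finiteness of the operator norms $\norm{\Ao}$, $\norm{\Uo_1}$, $\norm{\Gcl_d}$, all of which are guaranteed by the boundedness statements established in Section~\ref{s:LVP} and the assumption \eqref{exrec}.
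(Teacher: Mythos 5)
Your proof is correct and is exactly the argument the paper intends: Corollary~\ref{Cor2} is presented as an immediate consequence of the estimates \eqref{ere1} and \eqref{ere2} of Theorem~\ref{Th1}, with the only content being that the density hypothesis forces $\min_{g\in W_n}\norm{f-g}\to 0$ (using the nestedness of the $W_n$, which the paper itself records in Remark~\ref{Rem2}). No discrepancies to report.
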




Let us now compare the errors of the approximations $\fle$ with $n\geq 1$ and $\fze$.
The $\Lcl^2$-error estimates~\eqref{ere1},\eqref{ere2} for $n=0$ become:
\begin{align}\label{ere1z}
  \norm{ \Ao u_1 -0 } & \leq
  \norm{\Ao} \cdot \norm{\Uo_1} \cdot \norm{f},
	\\
\label{ere2z}
  \norm{ f - \fze } & \leq
  \norm{\Gcl_d} \cdot \norm{\Ao} \cdot \norm{\Uo_1} \cdot \norm{f}.
\end{align}


Comparing the error estimates~\eqref{ere1},\eqref{ere2} for the learned approximations with $n\geq 1$
and the error estimates~\eqref{ere1z},\eqref{ere2z} for the approximations using zero extension of the limited view
wave data, one sees that these error estimates differ
regarding
the following factors:
\begin{equation}\label{Eleze}
  \Ele(f) := \min\limits_{g\in W_n} \norm{ f-g },\quad
  \Eze(f):= \norm{f},
\end{equation}
correspondingly for learned approximations with $n\geq 1$ and approximation using zero extension.


The factors~\eqref{Eleze} can be seen as indicators for the expected approximation quality of the considered algorithms.
For a fixed non-zero function $f$, the factor $\Eze(f)$ is a fixed non-zero value, while the factor $\Ele(f)$ can be zero, or can be made
arbitrary small, see Corollaries~\ref{Cor1},\ref{Cor2}. Therefore, the approximation quality of the learned
approximations
is expected to be
better than of the approximations using zero extension of the data. This expectation will be confirmed by the
numerical results in the next section.
In fact, one can show (see Remark~\ref{Rem2} below) that
the factor $\Ele(f)$ is always less or equal than the factor $\Eze(f)$, and
the strict inequality $\Ele(f) < \Eze(f)$ holds under rather mild conditions on the function $f$ and the training functions $f_i$.
Generally, this condition can be expected to hold in practice.


\begin{rem}\label{Rem2}

Using properties of the projection operators in Hilbert spaces, one can show that
the sequence $\Ele(f)$ is nonincreasing, i.e.
\begin{equation}\label{rel1rm}
  \Ecl_n(f) \leq \Ecl_m(f) \quad\mbox{for}\quad n>m\geq 0.
\end{equation}
If additionally
\begin{equation}\label{fficrm}
  \inner{ f,f_i } \neq 0\quad\mbox{for some}\quad i\in\Set{ m+1,\ldots,n },
\end{equation}
then inequality~\eqref{rel1rm} is strict, i.e.
\begin{equation}\label{rel2rm}
  \Ecl_n(f) < \Ecl_m(f) \quad\mbox{for}\quad n>m\geq 0.
\end{equation}
Condition~\eqref{fficrm} is also necessary for~\eqref{rel2rm}, i.e. if~\eqref{rel2rm} holds, then we have~\eqref{fficrm}.

\end{rem}


\section{Numerical results}\label{s:NR}


In this section, we present results of the numerical realization of the proposed operator learning approach.

We consider the spatial dimension $d=2$, and we take the elliptical domain
$$
  \Om = \Set{ \kl{x_1,x_2}\in\R^2  |  \kl{ x_1/a_1 }^2 +  \kl{ x_2/a_2 }^2  < 1  },
$$
with $a_1=2$, $a_2=1$. We use the following parametrization of the boundary
$$
  \pOm = \Set{ \kl{ a_1 \cos\theta, a_2\sin\theta }   |   \theta\in[ -\pi,\pi )  },
$$
and we assume that the unobservable part of the boundary is (see Figure~\ref{Fig:pha}(left))
$$
  \Gm_2 = \Set{ \kl{ a_1 \cos\theta, a_2\sin\theta }   |   \theta\in[ 0.97, 2.17 )  }.
$$
\rtb
Thus, approximately 19\% of the angular   values are missing.
\rte

We work with the function $f$ presented in Figure~\ref{Fig:pha}(left). Its numerical full view wave boundary data $u=\Ucl f$ is given
in Figure~\ref{Fig:pha}(right), and we use the corresponding limited view wave boundary data $u_1 = \Ucl_1 f$.
The observation boundary $\Gm_1$ is discretized such that the distance between two consecutive points is in the interval
[0.0099, 0.0101]. We take the time step size as 0.01.


\begin{figure}[t]
\centering

\begin{tabular}{cc}
\parbox[c]{7.0cm}{
\includegraphics[width=7cm]{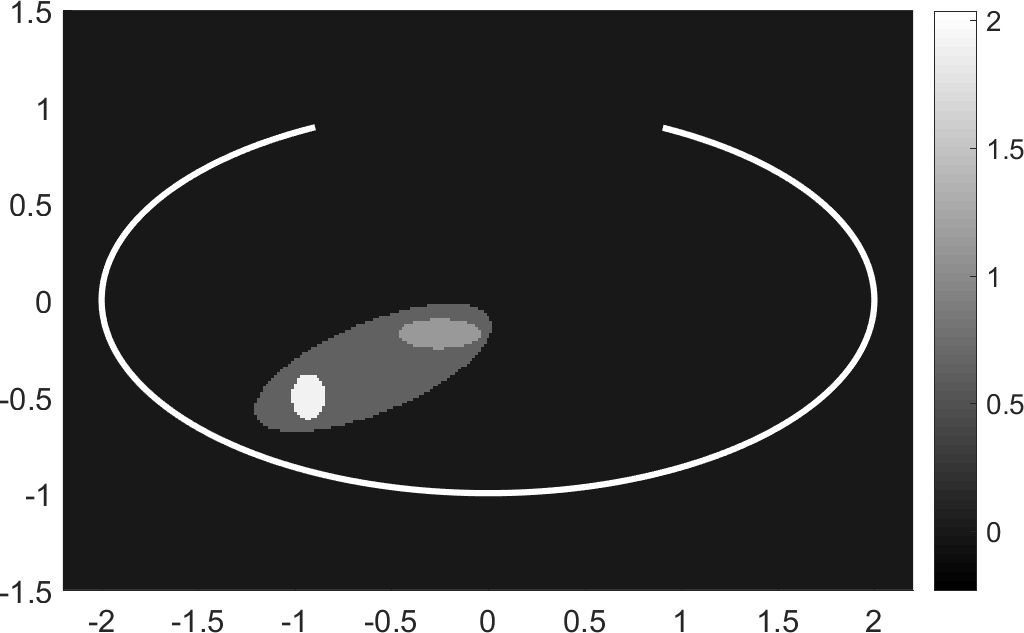}
}
&
\parbox[c]{7.5cm}{
\includegraphics[width=7.5cm]{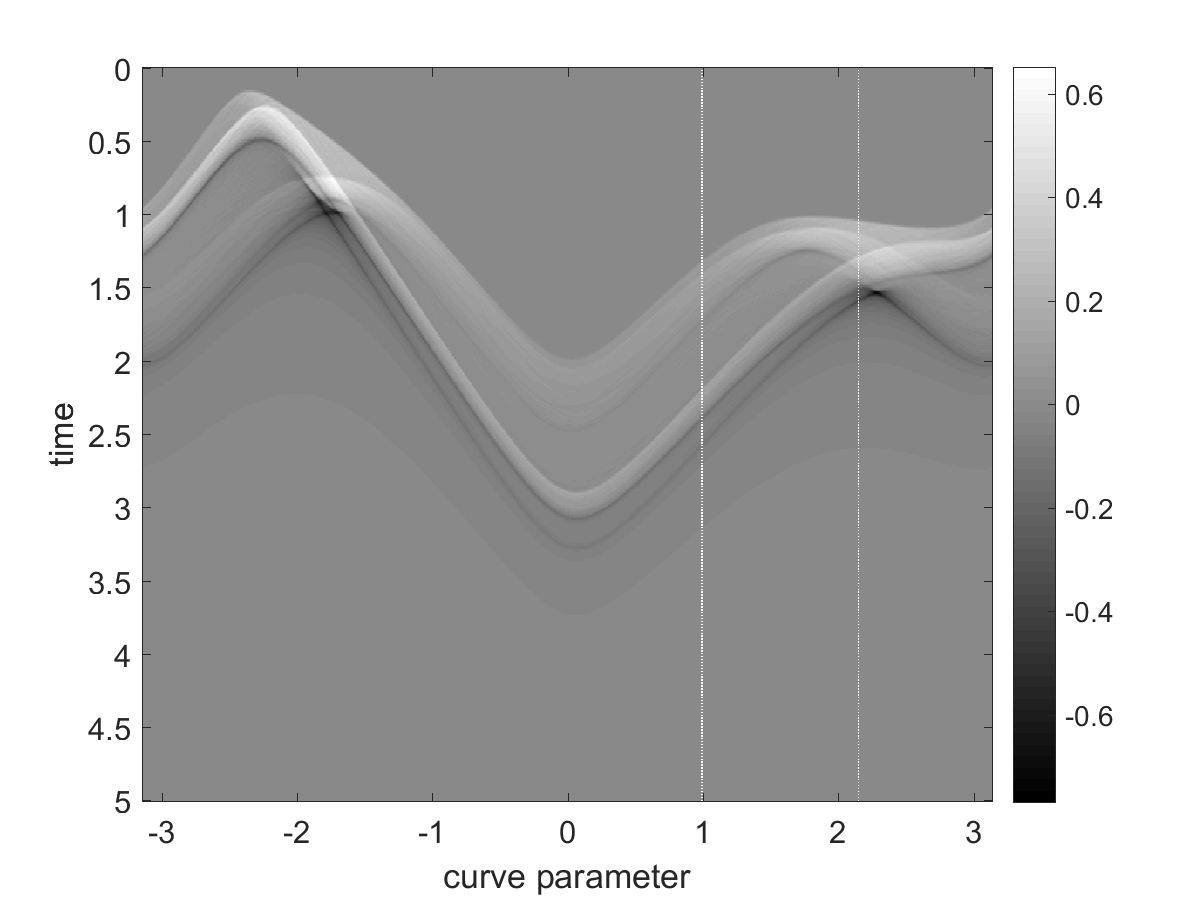}
}
\end{tabular}

\caption{
Left: the function $f$ that we use in our numerical experiments and the chosen observation boundary $\Gm_1$.
Right: the corresponding numerical full view wave boundary data $\Uo f$. The region between two white vertical
lines corresponds to the unknown part of the data on the unobservable part of the boundary $\Gm_2$.
\label{Fig:pha}
}

\end{figure}


We further assume that we know a rectangular region
$$
  K=\Set{ \kl{x_1,x_2}\in\R^2 |  -1.25 \leq x_1 < 0.5,\; -0.7 \leq x_2 < 0.1752 }
$$
that contains the support of $f$
(Figure~\ref{Fig:set}(top and bottom)).
We use this region $K$ for defining training functions $f_i$.
Namely, we consider partitions of the region $K$ into squares $K_i$, $i\in\Set{ 1,\ldots,n }$.
The square $K_i$ contains points $\kl{ x_1,x_2 }\in\R^2$ such that
\begin{align*}
-1.25+ \kl{   \left\lceil i/n_h  \right\rceil -1 }   w/n_w
\leq &\; x_1 < -1.25+      \left\lceil     i/n_h   \right\rceil   w/n_w    ,
\\
-0.7+     \kl{       i \ \mathrm{ mod } \ n_h -1    }    h/n_h
\leq &\; x_2 < -0.7+       \kl{        i \ \mathrm{mod} \ n_h         }       h/n_h,
\end{align*}
where $w=1.75$ (width of $K$), $h=0.8752$ (height of $K$), $n_w = \sqrt{ 2n }$, $n_h = n_w / 2$
(see Figure~\ref{Fig:set}(middle)).
Then we define the training function $f_i$ as the indicator function of the square $K_i$.
We take the number of the training functions in the form
$n=n_1\times n_2$, where $n_1$ and $n_2$ are the numbers of the partitioning intervals along the coordinate $x_1$ and $x_2$
correspondingly. We present the numerical results for $n=4\times 2,\; 8\times 4,\; 16\times 8,\; 32\times 16$.

\rtb
Let us note that we use the rectangular region $K$ for illustration purpose.
If the region containing $\supp(f)$ is not known, then one may consider squares
filling
the whole subset
$\Om_1$ of the detection region $\Dbb\kl{ \Gm_1 }$.
Further note that other type of basis functions can be used in a similar manner.
Kaiser-Bessel functions, which are frequently used in computed tomography
(see, e.g., \cite{MatLew96,WanSchSOA14,SchPerHal17}),
would be another reasonable choice.
\rte


\begin{figure}[t]
\centering
\begin{tikzpicture}[scale=1.5]
			\draw[domain=pi-rad(atan(2)):2*pi+rad(atan(2)),samples=300,variable=\x] plot({2*cos(\x r)},{sin(\x r)});
			\draw [fill=lightgray,draw=lightgray] (-1.25,-0.7) rectangle (0.5,0.1752);
			\draw[scale=1] (-0.375,-0.2624) node{$K$};
			\draw[dashed,domain=rad(atan(2)):pi-rad(atan(2)),samples=300,variable=\x] plot({2*cos(\x r)},{sin(\x r)});
			\draw[dotted,very thin] (${2*cos(atan(2))}*(1,0)+{sin(atan(2))}*(0,1)$)--(${(-2)*cos(atan(2)}*(1,0)+{sin(atan(2))}*(0,1)$);
			\draw (-1.25,0.5) node[]{$\Omega$};
			\draw (0,1.2) node[]{$\Gamma_2$};
			\draw (1.75,-0.8) node[]{$\Gamma_1$};
\end{tikzpicture}

\vspace*{0.8\baselineskip}

\hspace*{2.1cm}
\begin{tikzpicture}[scale=1.5]
			\draw[domain=pi-rad(atan(2)):2*pi+rad(atan(2)),samples=300,variable=\x] plot({2*cos(\x r)},{sin(\x r)});
			\foreach \x in {0,...,7}
				\foreach \y in {0,...,3}
					\draw [gray,very thin] (${-1.25+\x*(1.75/8)}*(1,0)+{-0.7+\y*(0.8752/4)}*(0,1)$) rectangle (${-1.25+(\x+1)*(1.75/8)}*(1,0)+{-0.7+(\y+1)*(0.8752/4)}*(0,1)$);
			\draw [fill=lightgreen,draw=lightgreen] (${-1.25}*(1,0)+{-0.7}*(0,1)$) rectangle (${-1.25+(1.75/8)}*(1,0)+{-0.7+(0.8752/4)}*(0,1)$);
			\draw [green,thick] (${-1.25}*(1,0)+{-0.7}*(0,1)$)--(${-1.25+0.21875}*(1,0)+{-0.7}*(0,1)$);
			\draw [green,thick] (${-1.25}*(1,0)+{-0.7}*(0,1)$) -- (${-1.25}*(1,0)+{-0.7+(0.8752/4)}*(0,1)$);
			\draw [fill=lightgray,draw=lightgray] (${-1.25}*(1,0)+{-0.7+(0.8752/4)}*(0,1)$) rectangle (${-1.25+(1.75/8)}*(1,0)+{-0.7+2*(0.8752/4)}*(0,1)$);
			\draw [gray,thick] (${-1.25}*(1,0)+{-0.7+(0.8752/4)}*(0,1)$)--(${-1.25+0.21875}*(1,0)+{-0.7+(0.8752/4)}*(0,1)$);
			\draw [gray,thick] (${-1.25}*(1,0)+{-0.7+(0.8752/4)}*(0,1)$) -- (${-1.25}*(1,0)+{-0.7+2*(0.8752/4)}*(0,1)$);
			\draw [fill=lightorange,draw=lightorange] (${-1.25}*(1,0)+{-0.7+2*(0.8752/4)}*(0,1)$) rectangle (${-1.25+(1.75/8)}*(1,0)+{-0.7+3*(0.8752/4)}*(0,1)$);
			\draw [darkorange,thick] (${-1.25}*(1,0)+{-0.7+2*(0.8752/4)}*(0,1)$)--(${-1.25+0.21875}*(1,0)+{-0.7+2*(0.8752/4)}*(0,1)$);
			\draw [darkorange,thick] (${-1.25}*(1,0)+{-0.7+2*(0.8752/4)}*(0,1)$) -- (${-1.25}*(1,0)+{-0.7+3*(0.8752/4)}*(0,1)$);
			\draw[dashed,domain=rad(atan(2)):pi-rad(atan(2)),samples=300,variable=\x] plot({2*cos(\x r)},{sin(\x r)});
			\draw[dotted,very thin] (${2*cos(atan(2))}*(1,0)+{sin(atan(2))}*(0,1)$)--(${(-2)*cos(atan(2)}*(1,0)+{sin(atan(2))}*(0,1)$);
			
			\draw [fill=lightorange,draw=lightorange] (${-1.25+3.5}*(1,0)+{-0.7+0.8}*(0,1)$) rectangle (${-1.25+(1.75/8)+3.5}*(1,0)+{-0.7+(0.8752/4)+0.8}*(0,1)$);
			\draw (${-1.25+(1.75/8)+3.5}*(1,0)+{-0.7+0.5*(0.8752/4)+0.8}*(0,1)$) node[anchor=west]{$\mathrm{supp}(f_3)$};
			\draw [darkorange,thick] (${-1.25+3.5}*(1,0)+{-0.7+0.8}*(0,1)$)--(${-1.25+0.21875+3.5}*(1,0)+{-0.7+0.8}*(0,1)$);
			\draw [darkorange,thick] (${-1.25+3.5}*(1,0)+{-0.7+0.8}*(0,1)$) -- (${-1.25+3.5}*(1,0)+{-0.7+(0.8752/4)+0.8}*(0,1)$);
			\draw [fill=lightgray,draw=lightgray] (${-1.25+3.5}*(1,0)+{-0.7+(0.8752/4)+0.9}*(0,1)$) rectangle (${-1.25+(1.75/8)+3.5}*(1,0)+{-0.7+2*(0.8752/4)+0.9}*(0,1)$);
			\draw (${-1.25+(1.75/8)+3.5}*(1,0)+{-0.7+1.5*(0.8752/4)+0.9}*(0,1)$) node[anchor=west]{$\mathrm{supp}(f_2)$};
			\draw [gray,thick] (${-1.25+3.5}*(1,0)+{-0.7+(0.8752/4)+0.9}*(0,1)$)--(${-1.25+0.21875+3.5}*(1,0)+{-0.7+(0.8752/4)+0.9}*(0,1)$);
			\draw [gray,thick] (${-1.25+3.5}*(1,0)+{-0.7+(0.8752/4)+0.9}*(0,1)$) -- (${-1.25+3.5}*(1,0)+{-0.7+2*(0.8752/4)+0.9}*(0,1)$);
			\draw [fill=lightgreen,draw=lightgreen] (${-1.25+3.5}*(1,0)+{-0.7+2*(0.8752/4)+1}*(0,1)$) rectangle (${-1.25+(1.75/8)+3.5}*(1,0)+{-0.7+3*(0.8752/4)+1}*(0,1)$);
			\draw (${-1.25+(1.75/8)+3.5}*(1,0)+{-0.7+2.5*(0.8752/4)+1}*(0,1)$) node[anchor=west]{$\mathrm{supp}(f_1)$};
			\draw [green,thick] (${-1.25+3.5}*(1,0)+{-0.7+2*(0.8752/4)+1}*(0,1)$)--(${-1.25+0.21875+3.5}*(1,0)+{-0.7+2*(0.8752/4)+1}*(0,1)$);
			\draw [green,thick] (${-1.25+3.5}*(1,0)+{-0.7+2*(0.8752/4)+1}*(0,1)$) -- (${-1.25+3.5}*(1,0)+{-0.7+3*(0.8752/4)+1}*(0,1)$);
			\draw (-1.25,0.5) node[]{$\Omega$};
			\draw (0,1.2) node[]{$\Gamma_2$};
			\draw (1.75,-0.8) node[]{$\Gamma_1$};
\end{tikzpicture}

\vspace*{0.8\baselineskip}

\begin{tikzpicture}[scale=1.5]
			\draw[domain=pi-rad(atan(2)):2*pi+rad(atan(2)),samples=300,variable=\x] plot({2*cos(\x r)},{sin(\x r)});
			\foreach \x in {0,...,7}
				\foreach \y in {0,...,3}
					\draw [gray,very thin] (${-1.25+\x*(1.75/8)}*(1,0)+{-0.7+\y*(0.8752/4)}*(0,1)$) rectangle (${-1.25+(\x+1)*(1.75/8)}*(1,0)+{-0.7+(\y+1)*(0.8752/4)}*(0,1)$);
			\draw[fill=lightgray,draw=lightgray,domain=0:2*pi,samples=300,variable=\x] plot({(0.65625*cos(\x r))*cos(pi/8 r)-(0.8752/3*sin(\x r))*sin(pi/8 r)-0.59375},{(0.65625*cos(\x r))*sin(pi/8 r)+(0.8752/3*sin(\x r))*cos(pi/8 r)-0.2624});
			\draw[scale=1] (-0.55,-0.0125) node[anchor=north]{$\mathrm{supp}(f)$};
			\draw[dashed,domain=rad(atan(2)):pi-rad(atan(2)),samples=300,variable=\x] plot({2*cos(\x r)},{sin(\x r)});
			\draw[dotted,very thin] (${2*cos(atan(2))}*(1,0)+{sin(atan(2))}*(0,1)$)--(${(-2)*cos(atan(2)}*(1,0)+{sin(atan(2))}*(0,1)$);
			\draw (-1.25,0.5) node[]{$\Omega$};
			\draw (0,1.2) node[]{$\Gamma_2$};
			\draw (1.75,-0.8) node[]{$\Gamma_1$};
\end{tikzpicture}
\caption{
Top: the rectangular region $K$ containing $\supp(f)$. Middle: the example of the partition of $K$ into $8\times 4$ squares.
The training functions $f_i$ are numbered starting from the bottom-left square from bottom to top and from left to right.
Bottom: the position of $\supp(f)$ in $K$ with the partition of $K$ into $8\times 4$ squares.
\label{Fig:set}
}
\end{figure}
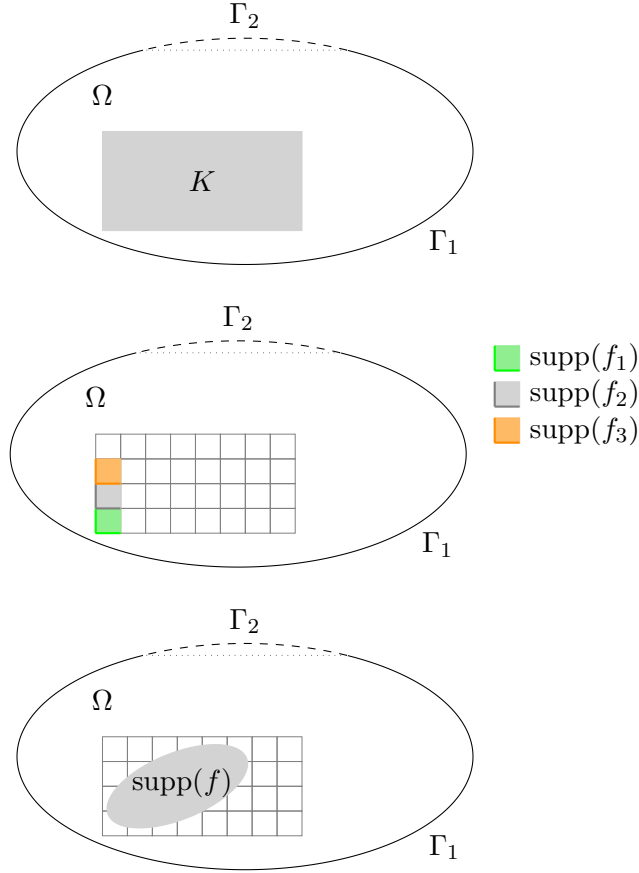	
	

The extended limited view data $\ule$ using the learned extension operator $\hAo$ for the considered values of $n$
are presented in Figure~\ref{Fig:estpr}. We observe that as $n$ increases, the extended data $\ule$ approaches the full
view data $u$ in Figure~\ref{Fig:pha}(right). Note that the chosen training functions $f_i$ satisfy the condition of Corollary~\ref{Cor2}.
Therefore, the approach of $\ule$ to the full view data $u$ is in agreement with our theoretical analysis.


\begin{figure}[t]
\centering

\begin{tabular}{cc}
\includegraphics[width=6cm]{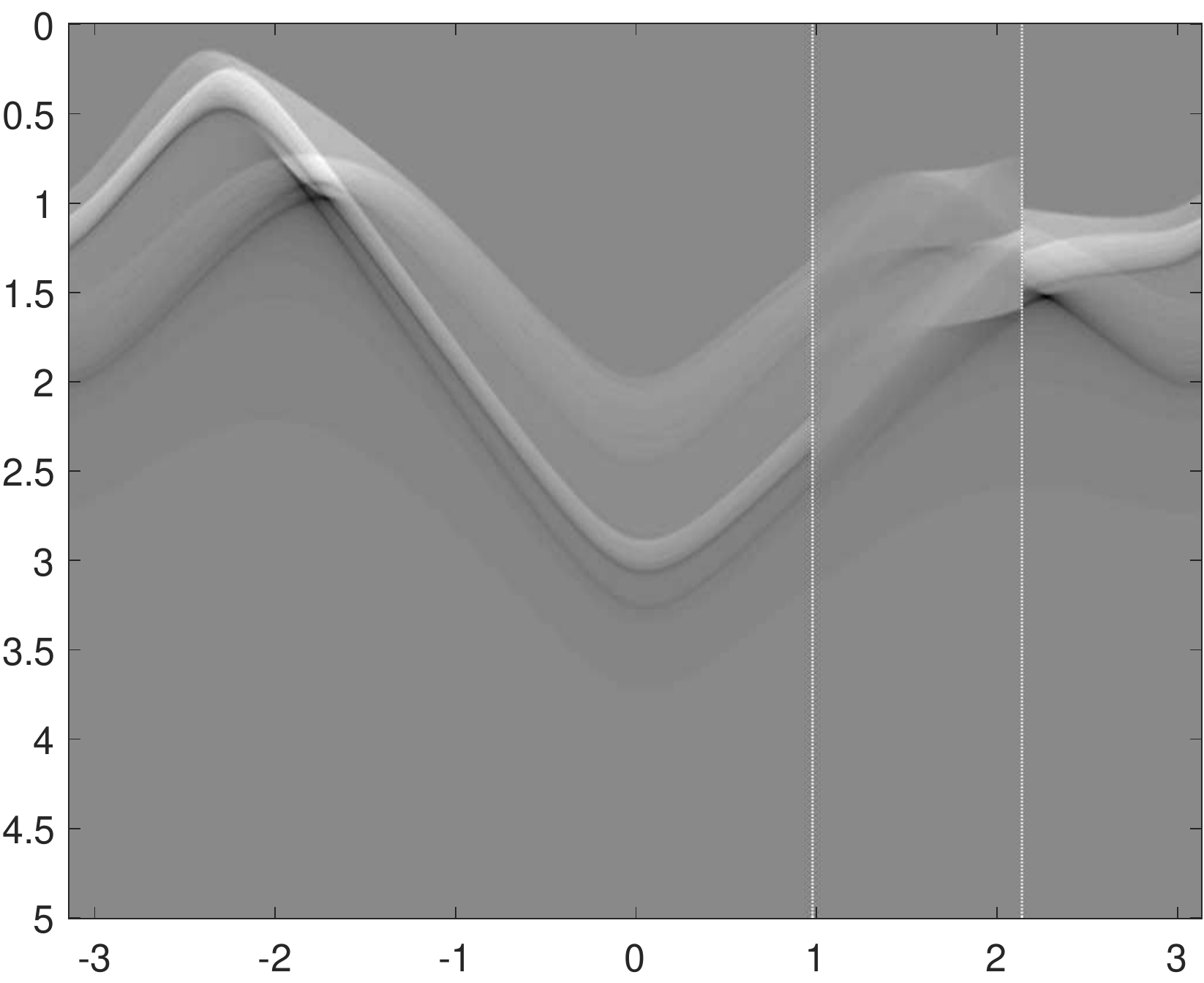}
&
\includegraphics[width=6cm]{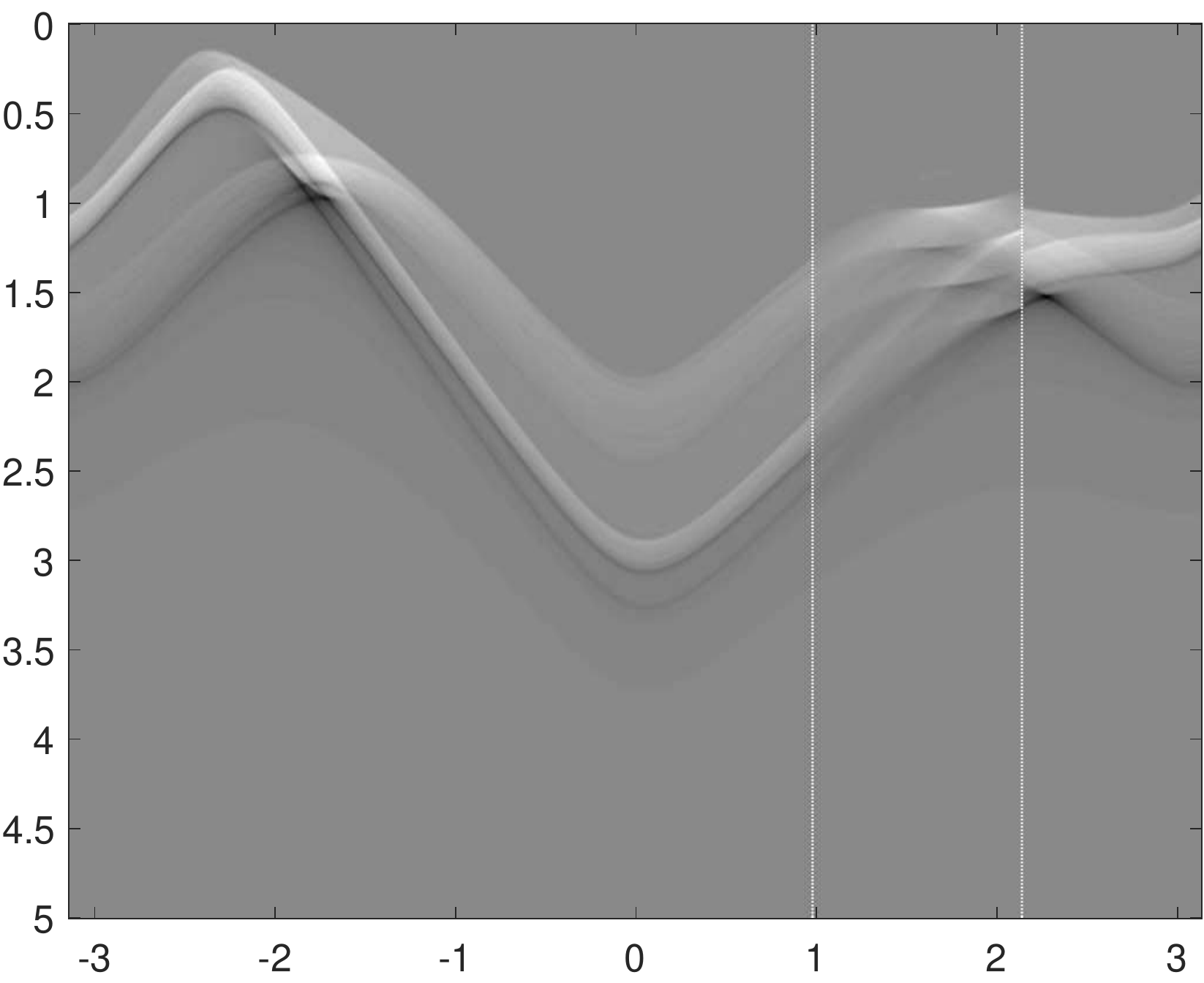} \\
\includegraphics[width=6cm]{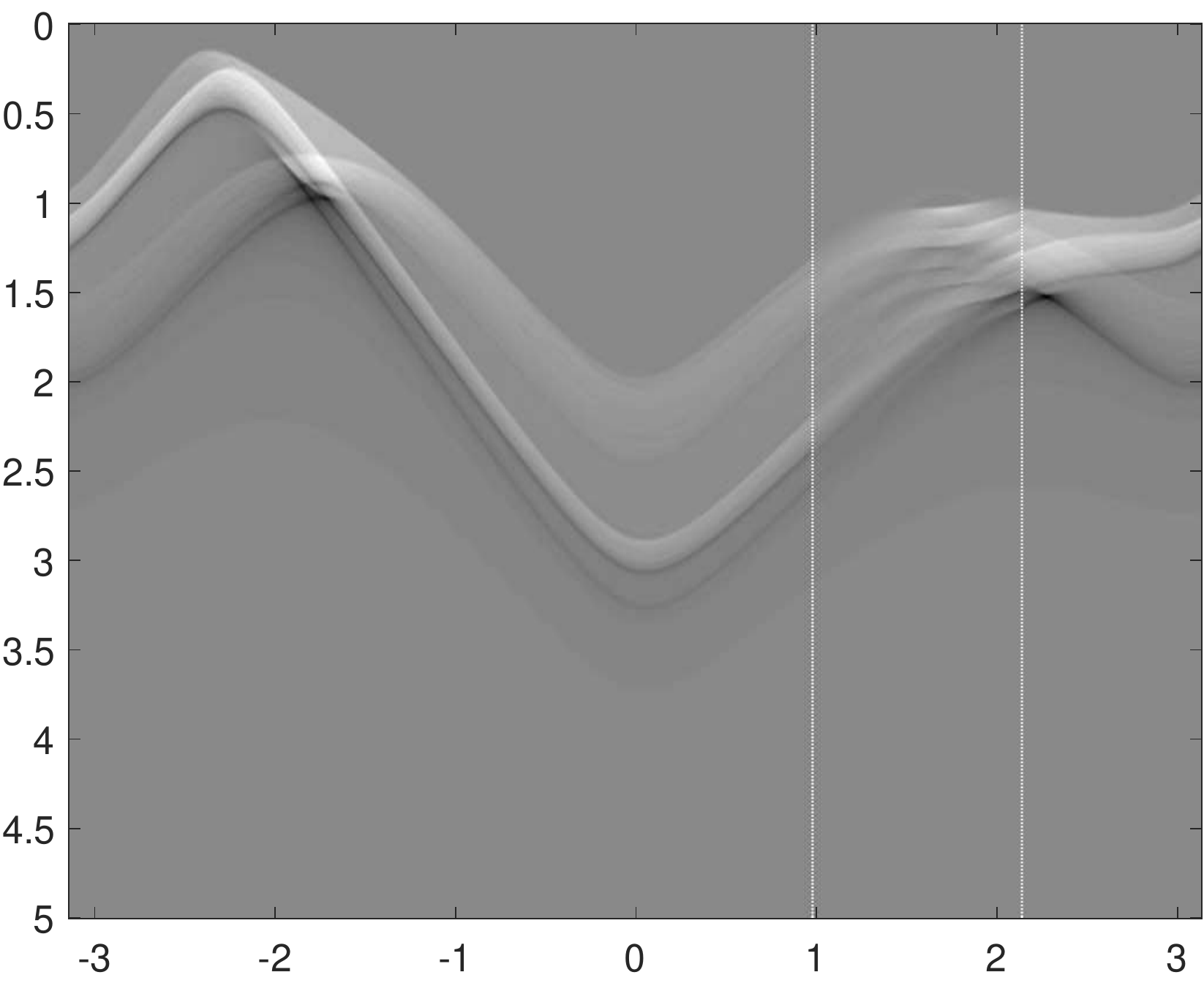}
&
\includegraphics[width=6cm]{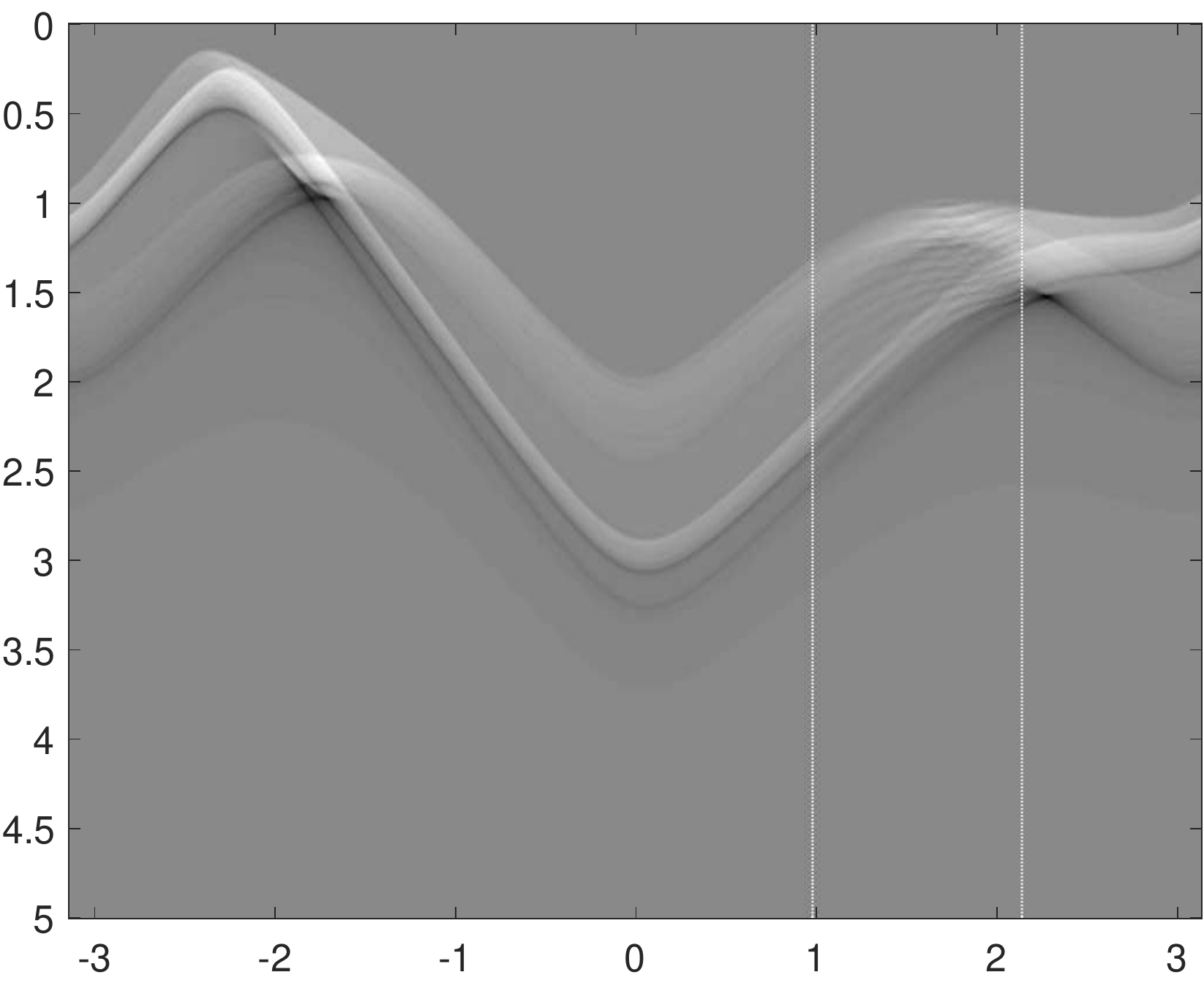}
\end{tabular}

\caption{
The extended limited view data $\ule$ using the learned extension operator $\hAo$ for
$n=4\times 2,\; 8\times 4,\; 16\times 8,\; 32\times 16$ (from left to right and from top to
bottom). The gray scaling is as in Figure~\ref{Fig:pha}(right).
\label{Fig:estpr}}

\end{figure}


The reconstructions $\fle$ using the extended data $\ule$ are presented in Figure~\ref{Fig:recs}(2nd and 3rd rows).
For comparison purpose, we also present the reconstruction $\hat f$ using the full view wave boundary data $u$, and the
reconstruction $\fze$ using the zero extended data $\uze$ (Figure~\ref{Fig:recs}(1st row)).
We evaluate the reconstructions at the points from the discrete set
$$
  \Om_h := \Set{ \kl{ -2.2 + n_1 h, -2.2 + n_2 h }\in\R^2 |  n_1,n_2\in\Set{ 0,1,\ldots,300 }    } \cap \Om,
$$
with $h=11/750$. We also consider the discrete $\Lcl^2$-error of a reconstruction $\hat f_*$ defined as follows:
$$
  E_2\kl{ \hat f_* } := \kl{  \sum\limits_{x\in\Om_h}
            \abs{ f(x) - \hat f_*(x) }^2\cdot h^2        }^{1/2} .
$$

Let us discuss the reconstructions in Figure~\ref{Fig:recs}. First of all, as expected, one observes strong artifacts
in the reconstruction $\fze$, especially outside of $\supp(f)$. These artifacts are considerably corrected in the reconstruction
$\hat f_{ 4\times 2 }$, and as the number of the training functions $n$ increases, the artifacts become weaker such that
the reconstruction $\hat f_{ 32\times 16 }$ is very similar to the reconstruction $\hat f$. This observation is also reflected
in $E_2$-errors that are presented in Figure~\ref{Fig:err}. Note that $\hat f$ differs from $f$ due to the discretization error of the
numerical realization of the formula $\Gcl_2$. Thus, as in the case of the data $\ule$, the approach of $\fle$ to $f$ is in 
agreement with Corollary~\ref{Cor2}.


\begin{figure}[t]
\centering
\begin{tabular}{cc}
\includegraphics[width=6cm]{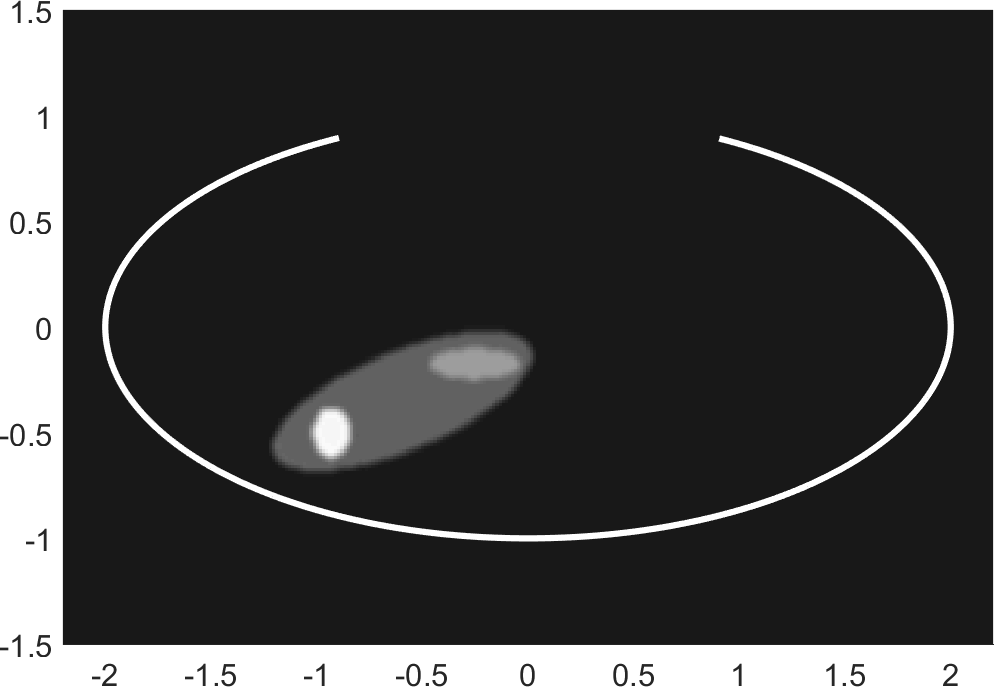}
&
\includegraphics[width=6cm]{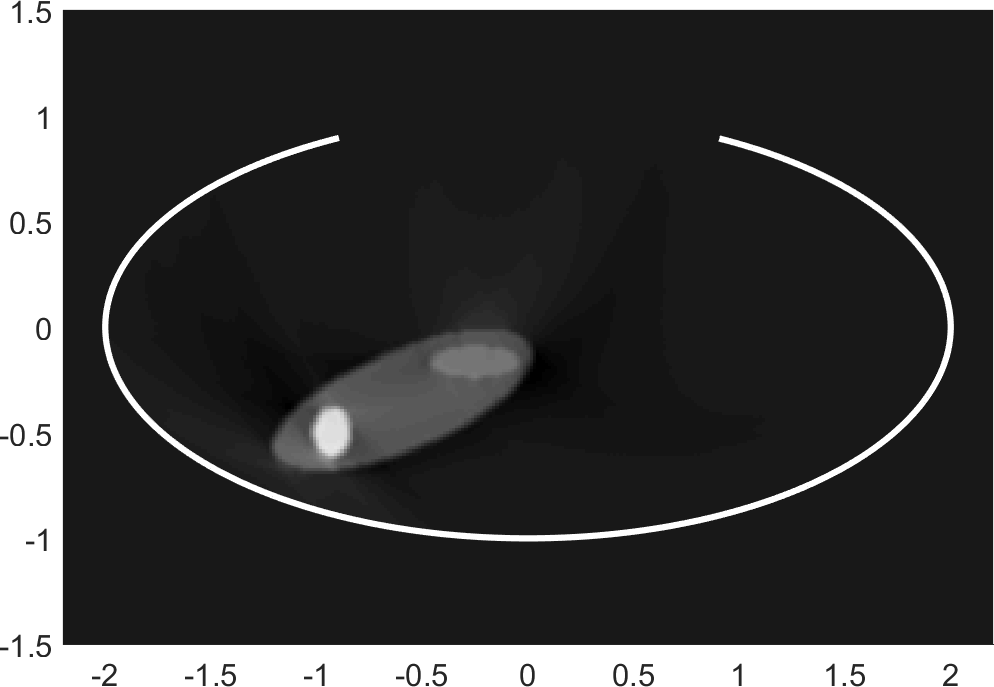} \\
\includegraphics[width=6cm]{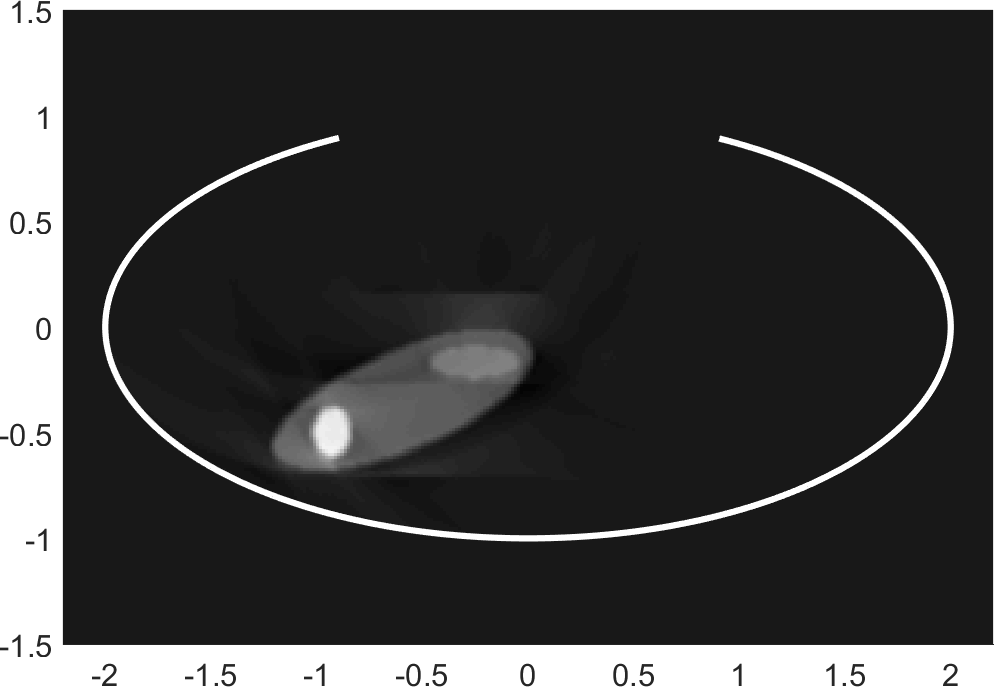}
&
\includegraphics[width=6cm]{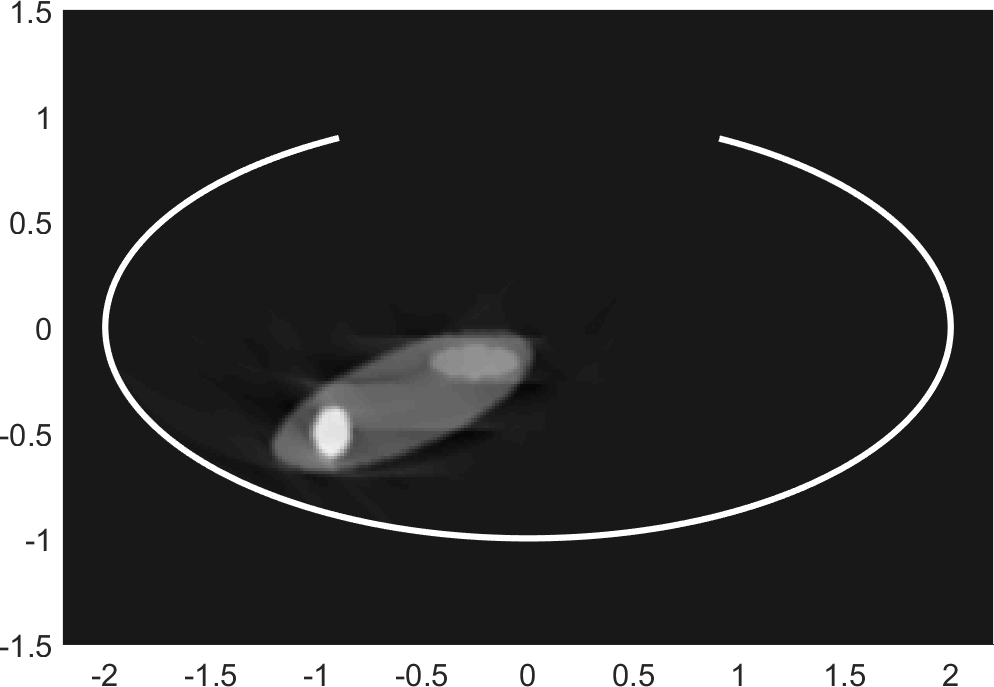} \\
\includegraphics[width=6cm]{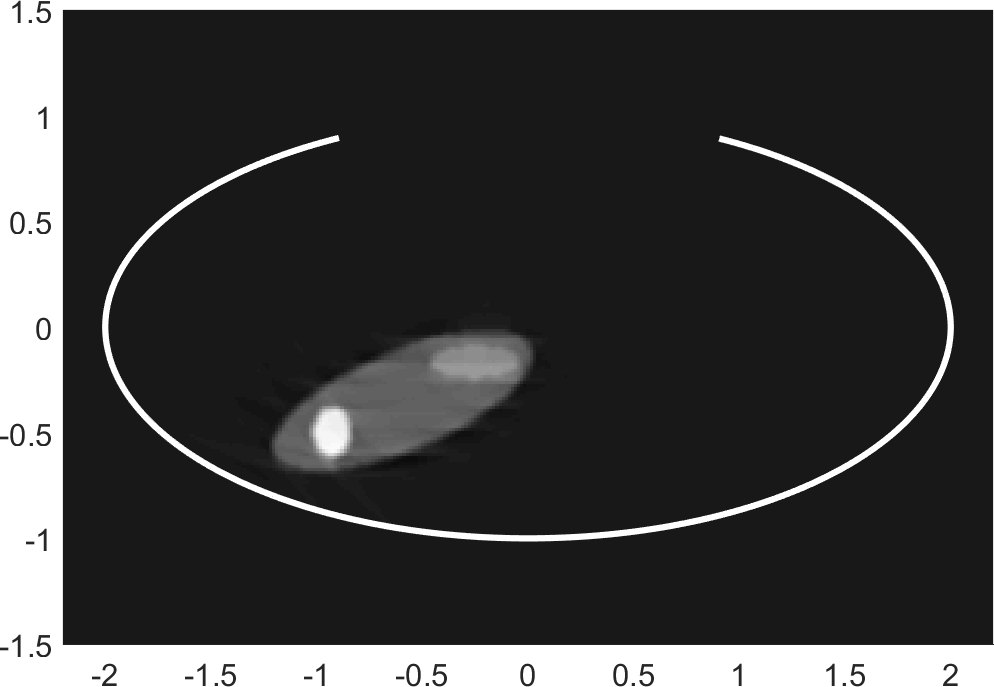}
&
\includegraphics[width=6cm]{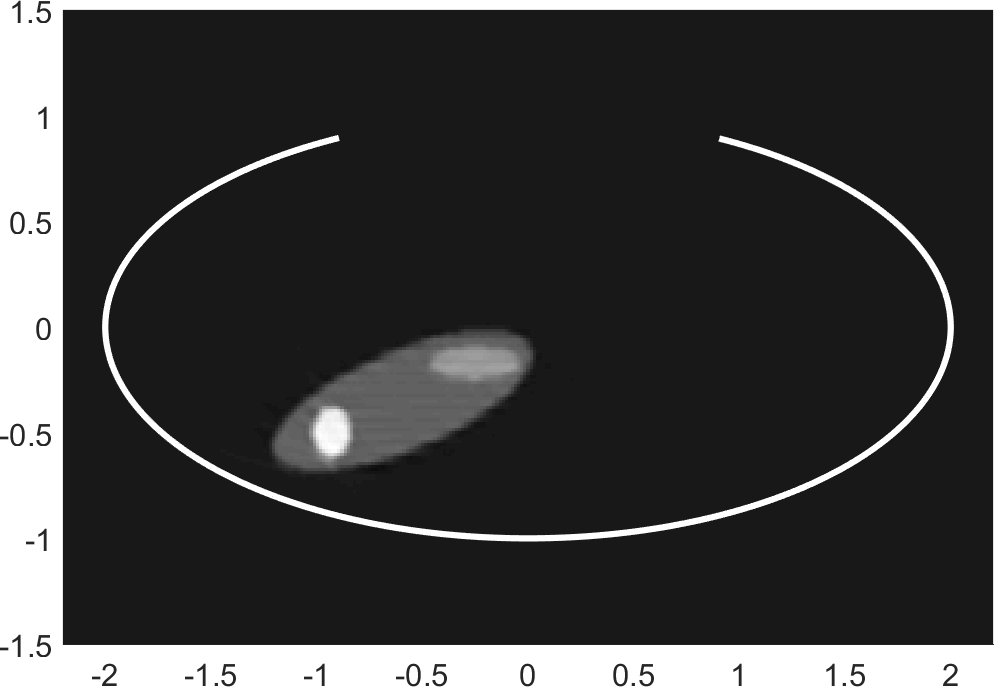}
\end{tabular}
\caption{
From left to right and from top to bottom: the reconstructions
$\hat f$, $\fze$, and $\fle$, for
$n=4\times 2,\; 8\times 4,\; 16\times 8,\; 32\times 16$.
The gray scaling is as in Figure~\ref{Fig:pha}(left).
\label{Fig:recs}}
\end{figure}


\begin{figure}[t]
		\centering
		\includegraphics[scale=0.5]{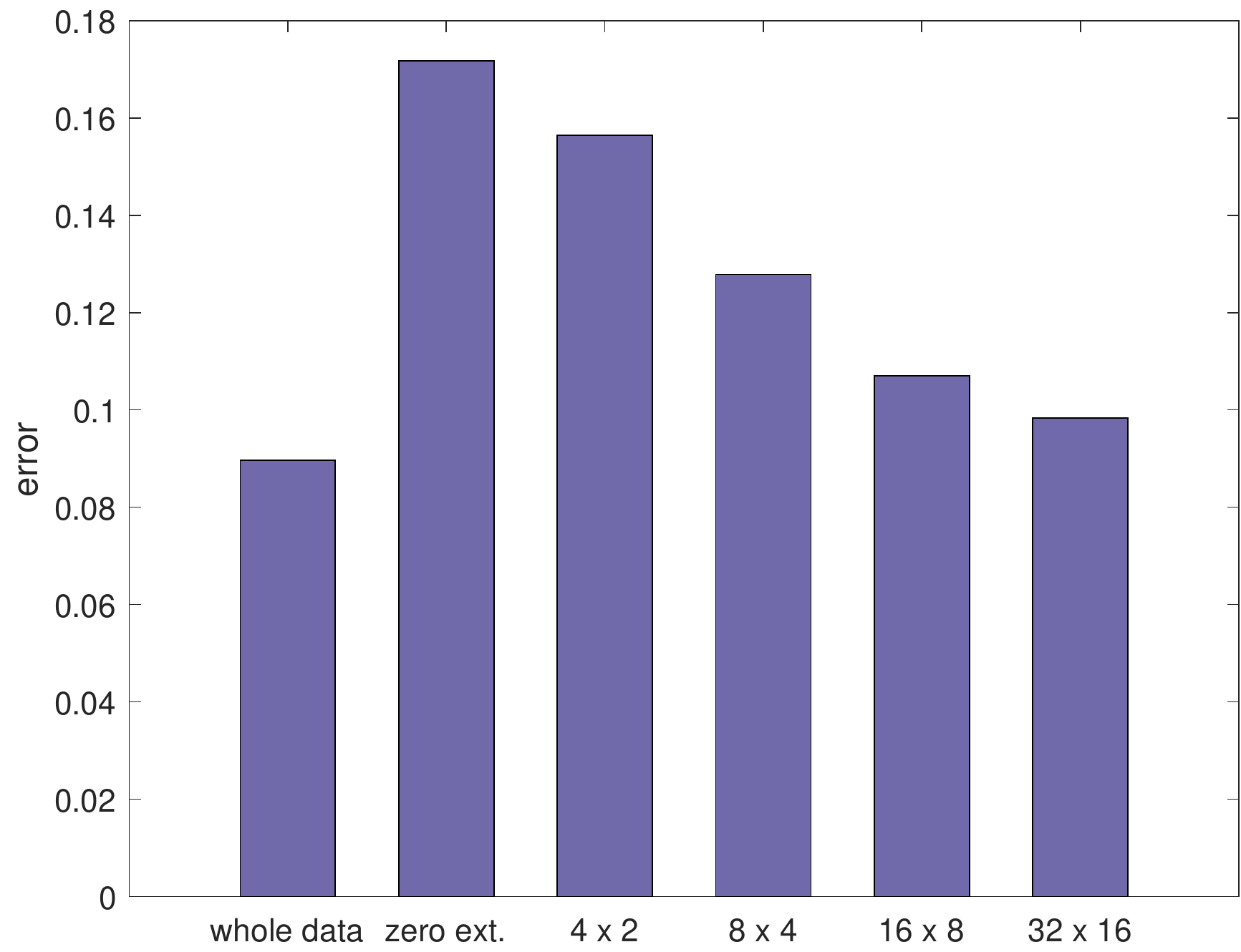}
\caption{
$E_2$-errors of the considered reconstructions
$\hat f$, $\fze$, and $\fle$, for
$n=4\times 2,\; 8\times 4,\; 16\times 8,\; 32\times 16$.
\label{Fig:err}}
\end{figure}


Finally, in Table~\ref{Tab:ctms}, we present the calculation times for the parts involved in the proposed reconstruction approach.
Our numerical results are performed with MATLAB version R2015b on the PC lenovo e31 with four processors Intel(R) Xeon(R) CPU 3.20GHz.
We see that the most time consuming part is the calculation of the matrix $\Pm_n^{-1}$, which is used for solving the system of linear
equations~\eqref{sysline}.
\rtb
Here, the calculation of $u_{1,i} = \Ucl_1 f_i$ is the most computationally expensive.
\rte
But for a given set of the training functions $f_i$,
$u_{1,i}$ and the matrix  $\Pm_n^{-1}$   have to be calculated only once
and prior to the actual image reconstruction process.

The calculation of the learned data extension $\hAo u_1$ is fast. In particular, for the biggest considered number $n=32\times 16$
of the training functions, the calculation time for $\hAo u_1$ is near the calculation time for the formula $\Gcl_2$. Thus, our proposed
operator learning approach fulfills the requirements that we stated at the beginning of Section~\ref{s:LEO}. Namely, the closeness
of the approximation $\hAo u_1$ to $\Ao u_1$, and the fast evaluation of $\hAo u_1$ are realized.

\begin{table}[t]
		\caption{
		Calculation times in seconds for the parts involved in the proposed reconstruction approach.
		}
		\label{Tab:ctms}
		\centering
		\begin{tabular}{|c|c|c|c|}
		\hline
			$n$ & $\Pm_n^{-1}$ &
			\parbox[c]{1.2cm}{\vspace*{0.1cm} \mc $\hAo u_1$\vspace*{0.2cm}}
			& $\Gcl_2$ \\ \hline
			$4 \times 2$ & $1179.73$ & $0.53$ & $4.20$\\
			$8 \times 4$ & $4707.31$ & $0.68$ & $3.55$ \\
			$16 \times 8$ & $19036.23$ & $1.41$ & $3.90$\\
			$32 \times 16$ & $75874.87$ & $6.07$ & $4.33$ \\ \hline
		\end{tabular}
\end{table}

\section{Conclusion and outlook}\label{s:Con}

In this paper, we demonstrated that an approximate extension of the limited view data
in PAT can be realized using an operator learning approach.
Our numerical results show that the learned extension of the limited view data with a
\rtba good \rte approximation quality
and a low computational cost is possible.
\rtba
A good approximation quality is especially achieved for the biggest number $n=32\times 16$
of considered training functions.
\rte
This makes the proposed learned data extension attractive for the algorithms that are
designed for the full view data. As an example, we demonstrated a satisfactory performance of a reconstruction formula
with the proposed learned data extension.

It could be interesting to look at the behavior of the proposed learned data extension without knowledge of a rectangular
region $K$ containing $\supp(f)$.
As we already noted,
in this case, one could consider partitions of the whole detection region $\Om_1$.
Also other training functions, such as
generalized Kaiser-Bessel functions (see, e.g., \cite{MatLew96,WanSchSOA14,SchPerHal17}), can be tried.

It is appealing to consider a comparison of the reconstruction quality and computation time of the proposed reconstruction
approach and iterative reconstruction algorithms.
Implementation of the proposed learned extension of the limited view
data to three spatial dimensions is an interesting aspect of future research.
\rtb
In this case, the choice of the generalized Kaiser-Bessel functions as the training functions $f_i$ is particularly convenient
because for them the wave data
$u_{1,i} = \Ucl_1 f_i$, $u_{2,i} = \Ucl_2 f_i$
are known analytically
(see, e.g., \cite{WanSchSOA14}). This makes the determination of the entries of the matrix
$\Pm_n$ fast. Also,  the solution of the system of linear equations~\eqref{sysline} can be done either using
iterative methods, such as conjugate gradient method, or an approximate inverse matrix to $\Pm_n$ can be determined.
\rte

Finally, it seems to be worth to examine applications of the presented operator learning approach to the
limited data problems in other tomographic modalities, such as sparse angle or region of interest computed tomography.

\section*{Acknowledgements}	

Authors gratefully acknowledge the support of the Tyrolean Science Fund (TWF). Sergiy Pereverzyev Jr.
gratefully acknowledges the support of the Austrian Science Fund (FWF): project P 29514-N32.
He also would like to thank Alessandro Verri, Vera Kurkova, Linh Nguyen, Jürgen Frikel,
Xin Guo,
Ding-Xuan Zhou, and members of Ding-Xuan Zhou's group at the City University of Hong Kong for
discussions concerning this work.



\end{document}